\pgfplotsset{compat=newest}
\algnewcommand\algorithmicinput{\textbf{Input:}}
\algnewcommand\Input{\item[\algorithmicinput]}
\algnewcommand\algorithmicoutput{\textbf{Output:}}
\algnewcommand\Output{\item[\algorithmicoutput]}
\theoremstyle{remark}
\newtheorem{proposition}{Proposition}
\newtheorem{definition}{Definition}
\def\BState{\State\hskip-\ALG@thistlm}
\def\munderbar#1{\underline{\sbox\tw@{$#1$}\dp\tw@\z@\box\tw@}}
\begin{document}
\title{\bf \LARGE Cooperative Routing for an Air-Ground Vehicle Team -- \\ Exact Algorithm, Transformation Method, and Heuristics}

\author{
  Satyanarayana G. Manyam$^{1}$, Kaarthik Sundar$^{2}$, and David W. Casbeer$^{3}$ %
\thanks{$^{1}$Research Scientist, Infoscitex corp., a DCS company, Dayton OH, 45431,  \texttt{msngupta@gmail.com}, }%
 \thanks{$^{2}$Center for Nonlinear Studies, Los Alamos, NM, 87544, \texttt{kaarthik01sundar@gmail.gov},}%
 \thanks{$^{3}$Technical Area Lead, Cooperative \& Intelligent Control, Control Science Center of Excellence, Air Force Research Laboratory, WPAFB, OH, 45433, \texttt{david.casbeer@us.af.mil}.}
}

\markboth{Journal of \LaTeX\ Class Files,~Vol.~14, No.~8, April~2018}%
{Gupta \MakeLowercase{\textit{et al.}}: Cooperative Vehicle Routing for an Aerial-Ground Vehicle Team}

\maketitle

\begin{abstract}
This article considers a cooperative vehicle routing problem for an intelligence, surveillance, and reconnaissance mission in the presence of communication constraints between the vehicles. The proposed framework uses a ground vehicle and an Unmanned Aerial Vehicle (UAV) that travel cooperatively and visit a set of targets while satisfying the communication constraints. The problem is formulated as a mixed-integer linear program, and a branch-and-cut algorithm is developed to solve the problem to optimality. Furthermore, a transformation method and a heuristic are also developed for the problem. The effectiveness of all the algorithms is corroborated through extensive computational experiments on several randomly generated instances. \\

\emph{Note to practitioners:} This paper is motivated by an intelligence, surveillance, and reconnaissance mission involving a single UAV and a ground vehicle, where the vehicles must coordinate their activity in the presence of communication constraints. The combination of a small UAV and a ground vehicle is an ideal platform for such missions, since small UAVs can fly at low altitudes and can avoid obstacles or threats that would be problematic for the ground vehicle alone. Furthermore, small UAVs can also be hand launched in difficult to reach areas and rough terrain. This paper addresses the coordinated routing problem involving these two vehicles and presents an algorithm to obtain an optimal solution for this problem, fast heuristics to obtain good feasible solutions, and also a transformation method to transform any instance of this cooperative vehicle routing problem to an instance of the one-in-a-set traveling salesman problem.

\end{abstract}

\begin{IEEEkeywords}
cooperative vehicle routing; path planning; unmanned aerial vehicles; communication constraints; mixed-integer linear program; heuristics; branch-and-cut
\end{IEEEkeywords}

\IEEEpeerreviewmaketitle

\section{Introduction} \label{sec:intro}
\IEEEPARstart{O}{ver} the past two decades, UAVs are being used routinely in both military and civilian applications such as reconnaissance and surveillance expeditions, border patrol, weather and hurricane monitoring, crop monitoring etc. (see \cite{Frew2009, Curry2004, ZajkowskiT2006} and references therein). They are prime candidates for intelligence, surveillance, and reconnaissance (ISR) missions due to their several advantages such as portability and low risk, to name a few. A typical ISR mission would require the UAVs to collect images, videos, or sensor data and transmit them to a base station. The data collected in these ISR missions are most often very time sensitive and ideally, the data would be useful only if it is processed in real-time or near real-time. We propose a framework to meet this objective, and solve the underlying cooperative routing problem for the UAV and the ground vehicle. 


In this article, we present a cooperative vehicle routing problem for an ISR mission involving a UAV and a ground vehicle. The main motivation of using a platform consisting of a ground vehicle equipped with a UAV is the lack of direct roads and the presence of geographical obstacles viz. rivers, lakes, mountains, etc. that must be circumvented to reach the target locations. During such instances, the ground vehicle may not be able to reach certain targets and the UAV could be used to gather information from those targets. However, due to size, weight, and power restrictions the UAV would be unable to transmit the data from remote targets to the base station. Furthermore, in an ideal situation, the ground vehicle would have a communication link to the base station, perhaps through satellite or by other means. The proposed methodology enforces a communication constraint between the UAV and the ground vehicle that work cooperatively to accomplish the mission. Hence, in this paper, the UAV is required to stay connected to the ground vehicle, under the pretext that the data can be instantaneously transmitted to the base station through the ground vehicle. We also do not impose any specific constraint on communication for the ground vehicle, and thus in cases where communication is degraded and the ground vehicle cannot maintain contact with the base station, the cooperative routing problem will still yield a viable solution. In such a scenario, one could think of this problem as a cooperative routing problem to minimize the effort taken by the vehicles to acquire the data and then ferry it back to the base station. The problem is formally defined in the following section. 

\begin{figure}[h]
\centering
\includegraphics[height=2in]{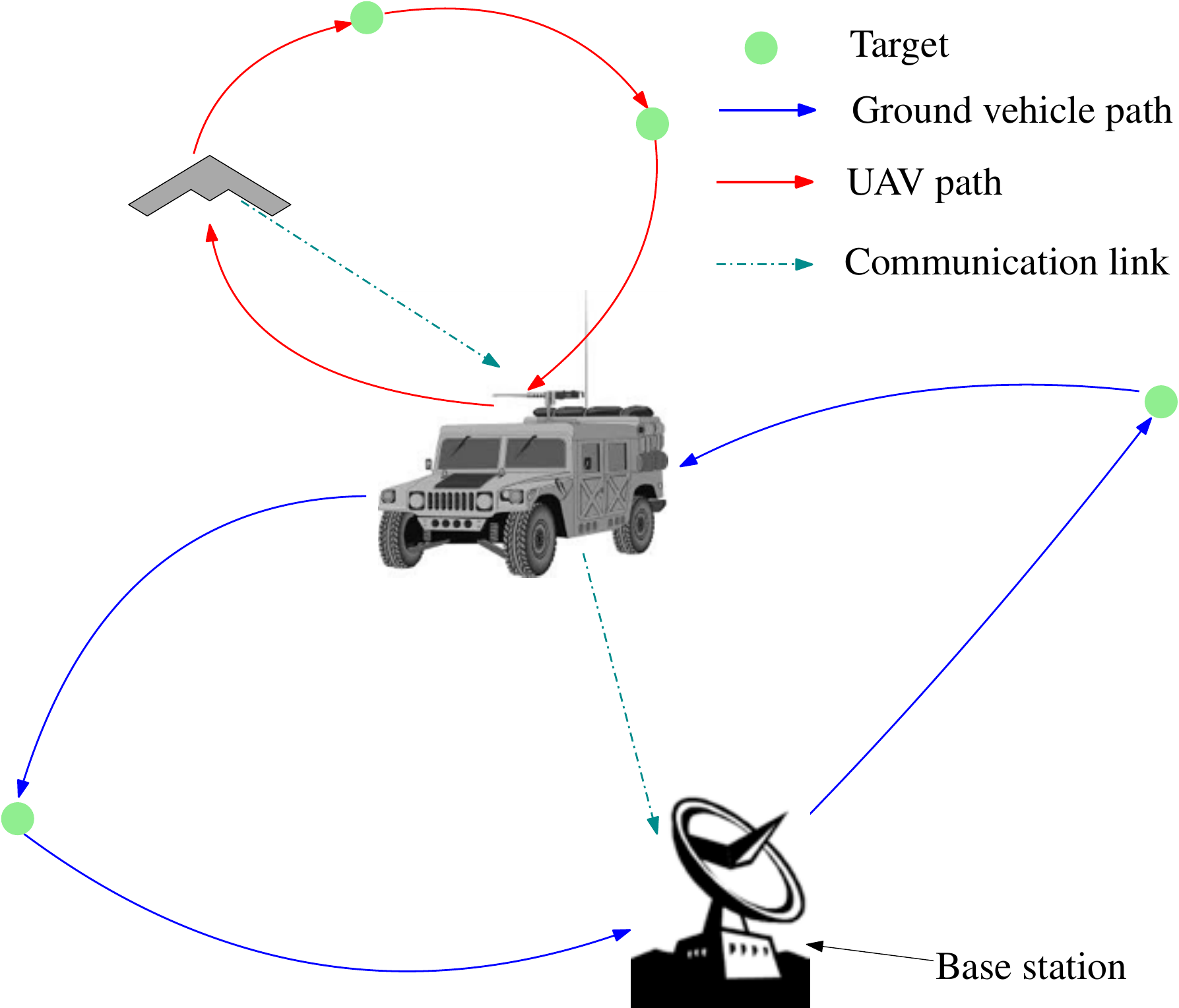}
\caption{The paths for the UAV and the ground vehicle.}
\label{fig:agvroute}
\end{figure}

\subsection{Cooperative routing problem} \label{subsec:intro_problem}
The problem considered in this article is formally defined as follows: we are given the locations of a set of targets and a base station where the ground vehicle is initially stationed. The UAV is carried by the ground vehicle. For each vehicle, we are given a travel cost between every pair of targets. This cost could be the euclidean distance between the pair of targets or could also depend on the terrain. Given this data, the objective of the problem is to determine paths of minimum cost for the ground vehicle and the UAV such that following conditions are satisfied: every target is visited either by the ground vehicle or by the UAV, and the UAV can always establish a reliable communication link with the ground vehicle.

The sequence of actions for a feasible mission is as follows: The ground vehicle starts at the base station and visits the first target in its route, then the UAV is deployed from the first target and it collects the data from a subset of targets according to its plan and returns to the ground vehicle. Then the ground vehicle proceeds to the next stop. This process is repeated until the data is collected from all of the targets. Once all the targets are visited, the ground vehicle carrying the UAV returns to the base station. An illustration of a typical route is shown in Fig. \ref{fig:agvroute}. As shown in the figure, any feasible path for the ground vehicle would start at the base station, make stops at a subset of targets, and return to the base station. As for the UAV, its feasible paths would be sub-tours rooted at the ground vehicle stops and satisfying the communication constraints. In particular, the objective of determining the routes for the vehicles involves (i) identifying the stops and order to visit them for the ground vehicle and (ii) at each stop, identifying the subset of targets and order in which the UAV has to visit them. Furthermore, to ensure that the UAV can always establish a communication link with the ground vehicle, we enforce the constraint that the UAV has to stay within a distance of $R$ units from the ground vehicle at all times during the mission. We shall, from here on, refer to this problem the \emph{cooperative aerial-ground vehicle routing problem} (CAGVRP). 

\subsection {Related work} \label{subsec:lit}
Cooperative control and path planning for a team of vehicles (UAVs or UAVs together with ground vehicles) has been a problem of interest and has received wide attention over the past decade (see \cite{rasmussenbook, kumarram, tsourdosbook, gilsparks, lasfarg, smithhdp, sujithicuas}). The CAGVRP is NP-hard because it is a generalization of the traveling salesman problem. Authors in \cite{smithhdp} propose a framework similar to the CAGVRP involving a truck traveling on a road network and a quadrotor for a package delivery problem in urban environments. These authors present a transformation algorithm to transform the problem to a generalized traveling salesman problem. This problem differs from the CAGVRP in two aspects: (i) the ground vehicle or the truck is restricted to travel along a road network and (ii) the quadrotor has to return to the ground vehicle after each delivery. Furthermore, the present paper differs from \cite{smithhdp} in that in addition to developing an analogous transformation algorithm, we present tailored algorithms to obtain an optimal solution to the CAGVRP and fast heuristics to obtain good feasible solutions to the problem. CAGVRP also resembles the two echelon vehicle routing problem \cite{perbolitwoech}; the key difference is in CAGVRP the locations at which the UAV tours originate are not known, which makes it even harder to solve.


Other problems addressed in the literature that are similar to the CAGVRP are the ring-star problem \cite{laportersp, kaarthikmrsp, baldaccimrsp, baldaccicaprsp} and the hierarchical ring-network problem \cite{golddam, altinkcor, carrollinoc, klincreview, shiself, stidsenringnet}. The ring-star problem aims to find a minimum cost cycle (or ring) through a subset of targets, and the targets that do not lie on the cycle should be assigned to one of the targets in the cycle. Algorithms to obtain an optimal solution based on the branch-and-cut paradigm are presented for the ring-star problem in \cite{laportersp}, for its multiple-depot variant in \cite{kaarthikmrsp}, and for the capacitated variants in \cite{baldaccimrsp, baldaccicaprsp}. In CAGVRP, not only must the targets that do not lie on the main ring be assigned to the targets on the ring, but sub-tours must also be chosen for all the off-ring targets. 

Another closely related problem is the hierarchical ring-network problem (HRNP) which aims to find a hierarchical two-layer ring-network. The top layer consists of a federal ring (analogous to the ground vehicle tour) which establishes connection between a number of node-disjoint metro rings (analogous to the UAV sub-tours) in the bottom layer. Authors in \cite{altinkcor} present heuristics and an approximation algorithm to solve the HRNP; they assume that the number of metro-rings and the nodes at which the metro rings are attached to the federal ring are given. In \cite{shiself}, heuristics and enumeration methods are given to solve the HRNP, where a certain demand must be satisfied between every pair of targets. Heuristics to solve a variant of the HRNP are presented in \cite{golddam}. 

A variant of HRNP, where the bottom layer could be rings or ring-stars is solved in \cite{carrollinoc}. The authors assume the number of possible local rings are given. The authors in \cite{stidsenringnet} use branch-and-price algorithm to solve the HRN problem with demands to be satisfied between every pair of targets. {Their solution procedure involved two steps: in the first step, they solve a modified HRNP which ignores the cost of the federal ring and design the metro rings; in the second step, they solve a generalized traveling salesman problem on the metro rings.} In all the aforementioned papers, the cost of the federal ring is either ignored or assumed to be equal to that of the metro ring. The CAGVRP differs from the HRNP in the following aspects: (i) the travel costs for the ground vehicle and the UAV are different, (ii) we have an additional distance constraint \emph{i.e.,} the distance between the UAV and the ground vehicle has to be within $R$ units throughout the mission, and (iii) we attempt to solve the coupled problem of finding paths for the ground vehicle and the UAV to minimize the total travel cost. Lastly, we point out that a preliminary version of this article that introduces the problem and develops a MILP formulation is published as a conference article \cite{Manyam2016}.

\subsection{Contributions:} \label{subsec:contributions}
The following are the main contributions of this article: (i) a mixed-integer linear programming (MILP) formulation and additional valid inequalities to strengthen the linear programming (LP) relaxation of the CAGVRP are developed, (ii) a branch-and-cut algorithm to compute an optimal solution to any instance of the problem based on the formulation is presented, (iii) an LP rounding-based heuristic is developed to increase the speed of the convergence of the branch-and-cut algorithm, (iv) a transformation algorithm to transform any instance of the CAGVRP to a corresponding instance of the one-in-a-set traveling salesman problem is developed, and finally (v) extensive computational results to corroborate the effectiveness of all the algorithms are presented. The rest of the article is organized as follows: we mathematically formulate the problem as a MILP in Sec. \ref{sec:prbfrm}. In Sec. \ref{sec:bandc} and Sec. \ref{sec:trans_alg}, we develop the branch-and-cut algorithm and the transformation algorithm for the CAGVRP, respectively. And finally, we present extensive computational results in Sec. \ref{sec:results} followed by conclusions and scope for future work in Sec. \ref{sec:conc}.

\section{Problem formulation} \label{sec:prbfrm}
We first introduce the required notations that are later used to formulate the CAGVRP as an MILP. 
\subsection{Notations} \label{subsec:notations}
Let $T$ denote the set of targets $\{0,\dots,n\}$; $0$ is the base station. The CAGVRP is defined on a mixed graph $G=(T, E\bigcup A)$, where $E$ and $A$ are a set of undirected edges and directed arcs, respectively, joining any two targets in $T$. Each edge $e = (i,j) \in E$ is associated with a non-negative cost $c_e$ required for the ground vehicle to traverse the edge $e$ (if $e$ connects the vertices $i$ and $j$, then $(i,j)$ and $e$ will be used interchangeably to denote the same edge in $E$). Similarly, each arc $[i,j] \in A$ is associated with a non-negative cost $d_{ij}$ required for the UAV to travel from target $i$ to target $j$; we remark that $A$ is allowed to have self loops \emph{i.e.}, $[i,i] \in A$ for every $i\in T$. We assume that the travel cost functions for the ground vehicle and the UAV satisfy the triangle inequality, \textit{i.e.}, for any distinct targets $i$, $j$, and $k$, $c_{ij} \leqslant c_{ik} + c_{k j}$ and $d_{ij} \leqslant d_{ik} + d_{k j}$, respec. We also associate with each arc $[i,j]\in A$, an auxiliary cost $f_{ij}$ to enforce the communication constraint that the UAV should always be within a distance of $R$ units from the ground vehicle. For any $i,j \in T$, $f_{ij}$ is set to $0$ if the Euclidean distance between targets $i$ and $j$ is less than $R$ units, and is set to an arbitrarily large value otherwise. We remark that this communication constraint can be enforced without the use of this auxiliary cost $f_{ij}$ for every arc $[i,j] \in A$ by introducing additional constraints. We choose the former to keep the presentation of the formulation cleaner. Additionally, for any subset of targets $S \subseteq T$, we let $\delta(S) \subseteq E$ denote the set of edges with one end in the set $S$ and another end in $T\setminus S$; similarly, we let $\delta^+(S) \subseteq A$ and $\delta^-(S) \subseteq A$ denote the set of arcs directed into and away from the set $S$, respectively. The sets $\delta(S)$, $\delta^+(S)$, and $\delta^-(S)$ are also referred to as cut-sets in the literature \cite{tothvigo,sundar2016generalized}. Finally, given $S\subseteq T$, we let $\gamma(S)$ denote the set of all the edges with both end points in $S$. 

We let $\mathcal F$ denote any feasible solution to the CAGVRP and associate vector of decision variables $\bm x \in \{0,1\}^{|E|}$, $\bm w \in \{0,1\}^{|A|}$, and $\bm y \in \{0,1\}^{|A|}$ with each $\mathcal F$. The component $x_e$ of $\bm x$, associated with edge $e \in E$, is a binary variable which takes a value $1$ if the edge $e$ is traversed by the ground vehicle, and $0$ otherwise. Each component $w_{ij}$ of $\bm w$, associated with the directed arc $[i,j] \in A$, is a binary variable that indicates the presence of the arc in a UAV sub-tour. Similarly, $y_{ij}$, a component of $\bm y$ is a binary assignment variable that takes a value $1$ if the target $i$ is assigned to the UAV sub-tour that whose root is the target $j$, and $0$ otherwise. Note that if a target $i$ is visited by a ground vehicle then the assignment variable $y_{ii}$ is equal to $1$ (the target is assigned to itself). 

\subsection{Mixed-integer programming formulations for CAGVRP} \label{subsec:mip}
With the notations and decision variables introduced thus far, CAGVRP is first formulated as mixed-integer program (MIP), whose objective minimizes the total cost of travel for the ground vehicle and the UAV. The objective is given by the following equation: 
\begin{flalign}
& \min ~\sum_{e\in E} c_e x_e + \sum_{[i,j] \in A} \left( d_{ij} w_{ij} + f_{ij} y_{ij} \right) \label{eq:obj}
\end{flalign}
The auxiliary cost term $f_{ij}y_{ij}$ for each arc $[i,j] \in A$ in the objective function implicitly enforces the constraint that the UAV should always be within a distance of $R$ units from the ground vehicle. We remark that $f_{ij}$ can also by used to denote the cost of communication; but, we do not do so throughout the rest of the paper, for ease of exposition. The MILP is subjected to the constraints detailed below.

\noindent \textit{\underline{\smash{Assignment constraints}}}: Here the constraints on communication between the UAV and the ground vehicle, while it is stopped at a target, are formulated. If a target $i$ is assigned to itself, then it corresponds to the case when the target being visited by the ground vehicle and if the target $i$ is assigned to another target $j$, then it corresponds to the case when the target $i$ is visited by the UAV when the ground vehicle has a stop at target $j$ and the UAV communicates the data it collects from target $i$ to the ground vehicle at target $j$. The assignment constraints are given as follows:
\begin{flalign}
& \sum_{j \in T} y_{ij} = 1 \quad \forall i \in T, \text{ and} & \label{eq:assignment1} \\
& y_{0 0} = 1. & \label{eq:assignment2}
\end{flalign}
The constraint \eqref{eq:assignment2} enforces the base station $0$ is visited by the ground vehicle; this constraints conforms with our assumption that the ground vehicle is initially stationed at the base station. 

\noindent \textit{\underline{\smash{Degree constraints}}}: 
The degree constraints in \eqref{eq:degx} ensure that the number of undirected ground vehicle edges incident on any target $i\in T$ is equal to $2$ if and only if the target $i$ is assigned to itself ($y_{ii} = 1$) \textit{i.e.}, the target is visited by the ground vehicle. The in-degree and out-degree constraints for the UAV routes (sub-tours rooted at targets where the ground vehicle stops) are enforced using the Eq. \eqref{eq:indegw} and \eqref{eq:outdegw}, respectively. These constraints ensure that, for \textit{every} target there are two UAV edges in the UAV route, one entering and another leaving the target. This is in contrast to the ground vehicle routes, where edges are incident on a target only when the target is assigned to itself. The UAV has to visit every target $i\in T$ irrespective of the value of $y_{ii}$. When the value of the $w_{ii}=1$, the constraints in \eqref{eq:outdegw} and \eqref{eq:indegw} would result in a trivial UAV sub-tour, which occurs when the UAV is not deployed when the ground vehicle stops at target $i$. The degree constraints are as follows:
\begin{flalign}
& \sum_{j \in T} x_{ij} = 2 y_{ii} \quad \forall i \in T,  &\label{eq:degx} \\
& \sum_{j \in T} w_{ij} = 1, \quad \forall i \in T, \text{ and } &\label{eq:outdegw} \\
&\sum_{i \in T} w_{ij} = 1, \quad \forall j \in T. &\label{eq:indegw}
\end{flalign}

\noindent \textit{\underline{\smash{Connectivity constraints}}}: The degree constraints and the assignment constraints alone are not sufficient to guarantee a feasible (connected) solution to the CAGVRP. An instance of such an infeasible solution is shown in Fig. \ref{fig:infeas}. In general, there are three predominant techniques to ensure connected solutions. The first is to introduce an additional set of constraints viz. the Miller-Tucker-Zemlin (MTZ) constraints, the second way is to use a multi- or single- commodity flow-based constraints, and the final way is to enforce connectivity constraints using cut-sets \cite{applegatebook}. For vehicle routing problems, the cut-set based connectivity constraints are known to outperform MTZ and flow constraints computationally\cite{tothvigo,sundar2015exact,sundar2016generalized,sundar2017algorithms}.
\begin{flalign}
&\sum_{e \in \delta (S)}x_e \geqslant 2 \sum_{j \in S}y_{ij}, \quad \forall i \in S, \, 0 \notin S, \, S \subset T, & \label{eq:xsec}\\
&\sum_{[i,j] \in \delta^+(S)} w_{ij} \geqslant 1-\sum_{j \in S} y_{ij}, \quad \forall i \in S, \, S \subset T, & \label{eq:wsec1}\\
&\sum_{[i,j] \in \delta^-(S)} w_{ij} \geqslant 1-\sum_{j \in S} y_{ij}, \quad \forall i \in S, \, S \subset T, &\label{eq:wsec2} 
\end{flalign}
Therefore, we utilize the constraints in \eqref{eq:xsec}, \eqref{eq:wsec1} and \eqref{eq:wsec2}, that are formulated using cut-sets, to ensure connectedness of the solutions. The constraints in \eqref{eq:xsec} ensure that each UAV sub-tour is rooted to at least one target in the ground vehicle path and the constraints \eqref{eq:wsec1} and \eqref{eq:wsec2} mandate the connectedness of the ground vehicle path. We remark that the number of connectivity constraints in Eqs. \eqref{eq:xsec} -- \eqref{eq:wsec2} are exponential and a dynamic cut-generation algorithm, that adds constraints only when they are violated, is detailed in the forthcoming section to handle these constraints in a computationally efficient manner. 
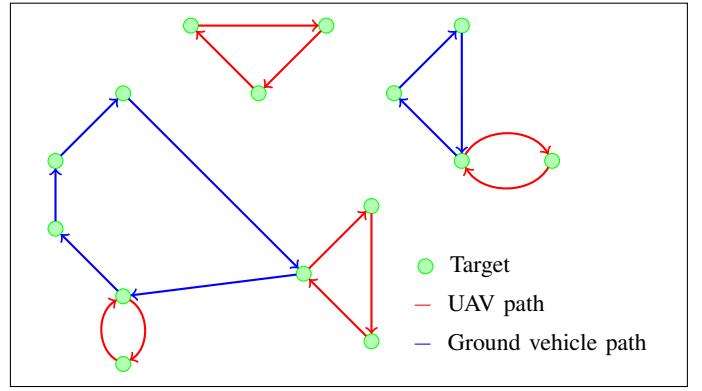
\begin{figure}
\centering
\begin{tikzpicture}[scale=0.6]
\draw [thin] (-2.5,-0.5) rectangle (12.5,8);
\tikzset{VertexStyle/.style = {shape=circle,draw, inner sep=0.7mm,color=green,fill=green!30}} 
\SetVertexNoLabel
\SetGraphUnit{1.5}
\Vertex{1}
\NO(1){2}
\NOWE(2){3}
\NO(3){4}
\NOEA(4){5}
\SOEA[unit=4](5){6}
\NOEA(6){7}
\SOEA(6){15}
\EA[unit=3](5){8}
\NOWE(8){9}
\NOEA(8){10}
\EA[unit=3](8){11}
\NOEA(11){12}
\SO[unit=3](12){13}
\EA[unit=2](13){14}

\tikzstyle{target}=[circle,draw,color=green,fill=green!30,inner sep=0.7mm]
\tikzstyle{route}=[rectangle, minimum height=0pt, minimum width=6pt, inner sep=0pt, draw]
\tikzset{EdgeStyle/.style = {->,blue,thick}}
\Edges(2,3,4,5,6,2)
\Edges(11,12,13,11)
\tikzset{EdgeStyle/.style = {->,bend left=60,red,thick}}
\Edge(2)(1)
\Edge(1)(2)
\Edge(13)(14)
\Edge(14)(13)
\tikzset{EdgeStyle/.style = {->,red,thick}}
\Edges(6,7,15,6)
\Edges(8,9,10,8)

\matrix [draw=none,below left] at (12,2.8) {
  \node [target,label={[label distance=0.1cm]0:{\small Target}}] {}; \\
  \node [route,draw=red,label={[label distance=0.1cm]0:{\small UAV path}}] {}; \\
  \node [route,draw=blue,label={[label distance=0.1cm]0:{\small Ground vehicle path}}] {}; \\
};
\end{tikzpicture}
\caption{An infeasible solution to the CAGVRP caused due to disconnected paths for both the UAV and the ground vehicle.}
\label{fig:infeas}
\end{figure}

\noindent \textit{\underline{\smash{UAV sub-tour constraints}}}: The UAV sub-tour constraints ensure that a UAV can traverse an arc between any two targets $i$ and $j$ only if both the targets are assigned to the same (ground-vehicle) target $k \in T$. Mathematically, this condition is enforced using the following nonlinear constraint:
\begin{flalign}
&w_{ij}  \leqslant \sum_{k \in T} y_{ik} y_{jk}, \qquad \forall [i,j] \in A. &\label{eq:wij}
\end{flalign}

In summary, the MIP formulation for the CAGVRP is given by the formulation $\mathcal F_1$, as follows:
\begin{flalign*}
& (\mathcal F_1):~ \text{Minimize Eq. \eqref{eq:obj}, subject to: Eqs. \eqref{eq:assignment1} -- \eqref{eq:wij}, } & \\
& \text{$\bm x \in \{0,1\}^{|E|}$, $\bm w \in \{0,1\}^{|A|}$, and $\bm y \in \{0,1\}^{|A|}$. } &
\end{flalign*}
As mentioned previously, the formulation $\mathcal F_1$, presented above, is non-linear due to the constraints in \eqref{eq:wij}. These constraints can be linearized by introducing an auxiliary variable $z_{ijk}$ for every triplet of targets $i,j,k\in T$. 
\begin{proposition} \label{prop:linearization}
The constraints in \eqref{eq:wij} permits the following linear reformulation: 
\begin{flalign}
&w_{ij} \leqslant \sum_{k \in T} z_{ijk} \qquad \forall [i,j] \in A, &\label{eq:wijlin}\\
&z_{ijk} \leqslant y_{ik} \qquad \forall i,j,k \in T, &\label{eq:zlin1}\\
&z_{ijk} \leqslant y_{jk} \qquad \forall i,j,k \in T, \text{ and} &\label{eq:zlin2} \\
&z_{ijk} \geqslant y_{ik} + y_{jk} -1 \qquad \forall i,j,k \in T.& \label{eq:zlin3} 
\end{flalign}
\end{proposition}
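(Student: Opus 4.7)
The plan is to verify the claimed linear reformulation by a standard product-linearization argument, exploiting that $y_{ik}$ and $y_{jk}$ are binary. Since the original constraint \eqref{eq:wij} bounds $w_{ij}$ by a sum of products $y_{ik}y_{jk}$ of binary variables, I expect each auxiliary variable $z_{ijk}$ to play the role of the product $y_{ik}\,y_{jk}$. I would therefore prove equivalence of the two formulations by establishing the two directions of implication separately: namely, that any feasible $(\bm w,\bm y)$ for \eqref{eq:wij} extends to a feasible $(\bm w,\bm y,\bm z)$ for \eqref{eq:wijlin}--\eqref{eq:zlin3}, and conversely that any feasible $(\bm w,\bm y,\bm z)$ for \eqref{eq:wijlin}--\eqref{eq:zlin3} projects onto a feasible $(\bm w,\bm y)$ for \eqref{eq:wij}.

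For the forward direction, given any integer feasible pair $(\bm w,\bm y)$ to $\mathcal F_1$, I would simply set $z_{ijk} := y_{ik}\,y_{jk}$ for every triple $i,j,k \in T$. Because $y_{ik},y_{jk}\in\{0,1\}$, a case split on the four possible values of $(y_{ik},y_{jk})$ shows that this assignment satisfies $z_{ijk}\le y_{ik}$, $z_{ijk}\le y_{jk}$, and $z_{ijk} \ge y_{ik}+y_{jk}-1$; moreover $\sum_k z_{ijk} = \sum_k y_{ik}y_{jk} \ge w_{ij}$ by \eqref{eq:wij}, giving \eqref{eq:wijlin}.

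For the reverse direction, I would use \eqref{eq:zlin1} and \eqref{eq:zlin2} to get $z_{ijk}\le\min\{y_{ik},y_{jk}\}$, and then observe that for binary $y_{ik},y_{jk}$ one has $\min\{y_{ik},y_{jk}\} = y_{ik}\,y_{jk}$. Summing over $k$ and chaining with \eqref{eq:wijlin} yields $w_{ij}\le\sum_{k\in T} z_{ijk}\le\sum_{k\in T} y_{ik}y_{jk}$, which is precisely \eqref{eq:wij}. This direction actually uses only two of the three $z$-constraints; the third inequality \eqref{eq:zlin3} is not required for correctness but, together with \eqref{eq:zlin1}--\eqref{eq:zlin2}, pins $z_{ijk}$ exactly to $y_{ik}y_{jk}$ whenever the $y$'s are integer, which tightens the LP relaxation and is therefore worth retaining.

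There is no real obstacle here; the only subtlety I would flag is that the equivalence relies crucially on $\bm y$ being binary, since $\min\{a,b\} = ab$ fails for fractional $a,b\in[0,1]$. Consequently the reformulation preserves the MIP's feasible set but not in general its LP relaxation value — a standard fact about the McCormick-style linearization of products of binaries that I would mention in a brief remark after the proof.
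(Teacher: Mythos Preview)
Your proposal is correct and follows essentially the same approach as the paper: a case analysis on the binary values of $(y_{ik},y_{jk})$ to verify that the auxiliary variable $z_{ijk}$ is pinned to the product $y_{ik}y_{jk}$, from which the equivalence of \eqref{eq:wij} and \eqref{eq:wijlin}--\eqref{eq:zlin3} follows. Your treatment is in fact more complete than the paper's terse proof, since you explicitly argue both directions of the equivalence and correctly observe that \eqref{eq:zlin3} is not needed for the feasibility implication but serves to tighten the LP relaxation.
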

\begin{proof}
We prove the proposition by considering the following two cases (i) $y_{ik} = y_{jk} = 1$ and (ii) $y_{ik} = 0$ or $y_{jk} = 0$. For case (i), observe that $z_{ijk}$ will take a value of $1$ due to the constraints \eqref{eq:zlin1}--\eqref{eq:zlin3}. Similarly for case (ii), suppose $y_{ik} = 1$ and $y_{jk} = 0$, then $z_{ijk} = 0$. A similar argument holds when $y_{jk} = 1$ and $y_{ik} = 0$. 
\end{proof}

Using the above proposition \ref{prop:linearization}, the MIP formulation $\mathcal F_1$ can be equivalently reformulated into a MILP given by:
\begin{flalign*}
&(\mathcal{F}_2):~ \text{Minimize Eq. \eqref{eq:obj}, subject to: Eqs. \eqref{eq:assignment1} -- \eqref{eq:wsec2},} & \\
& \text{Eqs. \eqref{eq:wijlin} -- \eqref{eq:zlin3}, $z_{ijk} \in \{0,1\} \quad \forall i,j,k \in T$ and } & \\
& \text{$\bm x \in \{0,1\}^{|E|}$, $\bm w \in \{0,1\}^{|A|}$, and $\bm y \in \{0,1\}^{|A|}$. } &
\end{flalign*}
If the binary restrictions on all the variables of a MILP are relaxed, then we call that model a LP relaxation of the MILP. Let $\mathcal F_2^{lp}$ denote the LP relaxation of $\mathcal F_2$. In the following subsection, we shall strengthen $\mathcal F_2^{lp}$ by introducing additional valid inequalities; a constraint is called a valid inequality if it does not remove any feasible solution to the MILP.

\subsection{Additional valid inequalities} \label{subsec:vi}
We first adapt the well-known $2$-matching inequalities that is known to be valid for the Traveling Salesman Problem (TSP) \cite{grotpadpoly,grotschelhollandmp,fischettigtsp} and a variety of other vehicle routing problems related to the CAGVRP \cite{laportersp,kaarthikmrsp,sundar2015exact}. Specifically, we consider the following inequality: 
\begin{flalign}
\sum_{e \in \gamma(H)}x_e + \sum_{e \in I}x_e \leqslant &\sum_{i \in H} y_{ii} + \frac{|I|-1}{2}, \label{eq:twomatch} 
\end{flalign}
for all $H \subseteq T$ and $I \subseteq \delta(H)$. Here, $H$ is called the handle and $I$, the teeth. $H$ and $I$ satisfy the following conditions: (i) no two edges in the teeth are incident on the same target and (ii) the number of edges in the teeth is odd and greater than equal to $3$. An interested reader is referred to \cite{laportersp,kaarthikmrsp,sundar2015exact} for a proof of validity of the above inequality.

The next set of inequalities enforce additional restrictions on the UAV and ground vehicle paths. Before presenting these inequalities, we introduce an additional notation. For each target $i \in T$, we let $R_{i} \subseteq T$ denote the subset of targets that are within a distance of $R$ units (communication range) from $i$. Given this notation, the following set of inequalities are valid for the CAGVRP:
\begin{flalign}
& \sum_{i \in R_{j}} y_{i j} \leqslant |R_{j}|y_{j j} \quad \forall j \in T, & \label{eq:assign_vi} \\
& w_{i j} \leqslant 2 - y_{i i} - y_{j j} \quad \forall [i, j] \in A. & \label{eq:gv_vi}
\end{flalign}
Eq. \eqref{eq:assign_vi} bounds the number of targets the UAV can visit in a sub-tour based on the root node of the sub-tour and the Eq. \eqref{eq:gv_vi} ensures that the UAV does not traverse an edge that is already been traversed by the ground vehicle. In the next section, we present an overview of the branch-and-cut algorithm to solve the MILP formulation $\mathcal F_2$ to optimality. 

\section{Branch-and-cut algorithm} \label{sec:bandc}
In this section, we briefly present the main ingredients of a branch-and-cut algorithm that is used to solve the formulation $\mathcal F_2$ to optimality. The formulation $\mathcal F_2$ for the CAGVRP can be provided to off-the-shelf commercial branch-and-bound solvers like CPLEX \cite{cplexmanual} to obtain an optimal solution. These solvers typically require the complete formulation to be provided to the solver a priori, which in our case would not be computationally tractable due to the exponential number of connectivity constraints in Eqs. \eqref{eq:xsec} -- \eqref{eq:wsec2} and the 2-matching constraints in Eq. \eqref{eq:twomatch}. To address this issue of exponential number of constraints, we utilize the following approach: we relax these constraints from
the formulation. We let $\operatorname{relax}(\mathcal F_2)$ denote this relaxed formulation and $\operatorname{relax}(\mathcal F_2^{lp})$, its LP relaxation. Whenever the solver obtains a feasible solution to $\operatorname{relax}(\mathcal F_2)$ or fractional feasible solution $\operatorname{relax}(\mathcal F_2^{lp})$, we check if any of
these connectivity constraints or $2$-matching constraints are violated by the feasible solution, integer or fractional. If so, we add the constraint dynamically and
continue solving the original problem. This process of adding constraints to the problem sequentially has been observed to be computationally efficient for the traveling salesman problem and the variants of CAGVRP \cite{tothvigo,sundar2015exact}. The algorithms used to identify violated constraints given a fractional or integer feasible solutions to $\operatorname{relax}(\mathcal F_2)$ or $\operatorname{relax}(\mathcal F_2^{lp})$, respectively, are referred to as separation algorithms. The branch-and-cut solvers like CPLEX provide ways to interact with the branch-and-cut solution process and modify its behaviour through external user-defined functions referred to as callback functions. We dynamically identify and add violated constraints given a fractional or an integer solution to the relaxed problem using the callback functions known as the ``user-cut'' or the ``lazy constraint'' callbacks, respectively. A flow-chart of the modifications done to the actual branch-and-cut solution process is shown in Fig. \ref{fig:flowchart-bc}. 

\begin{figure}
    \centering
    \includegraphics[scale=0.5]{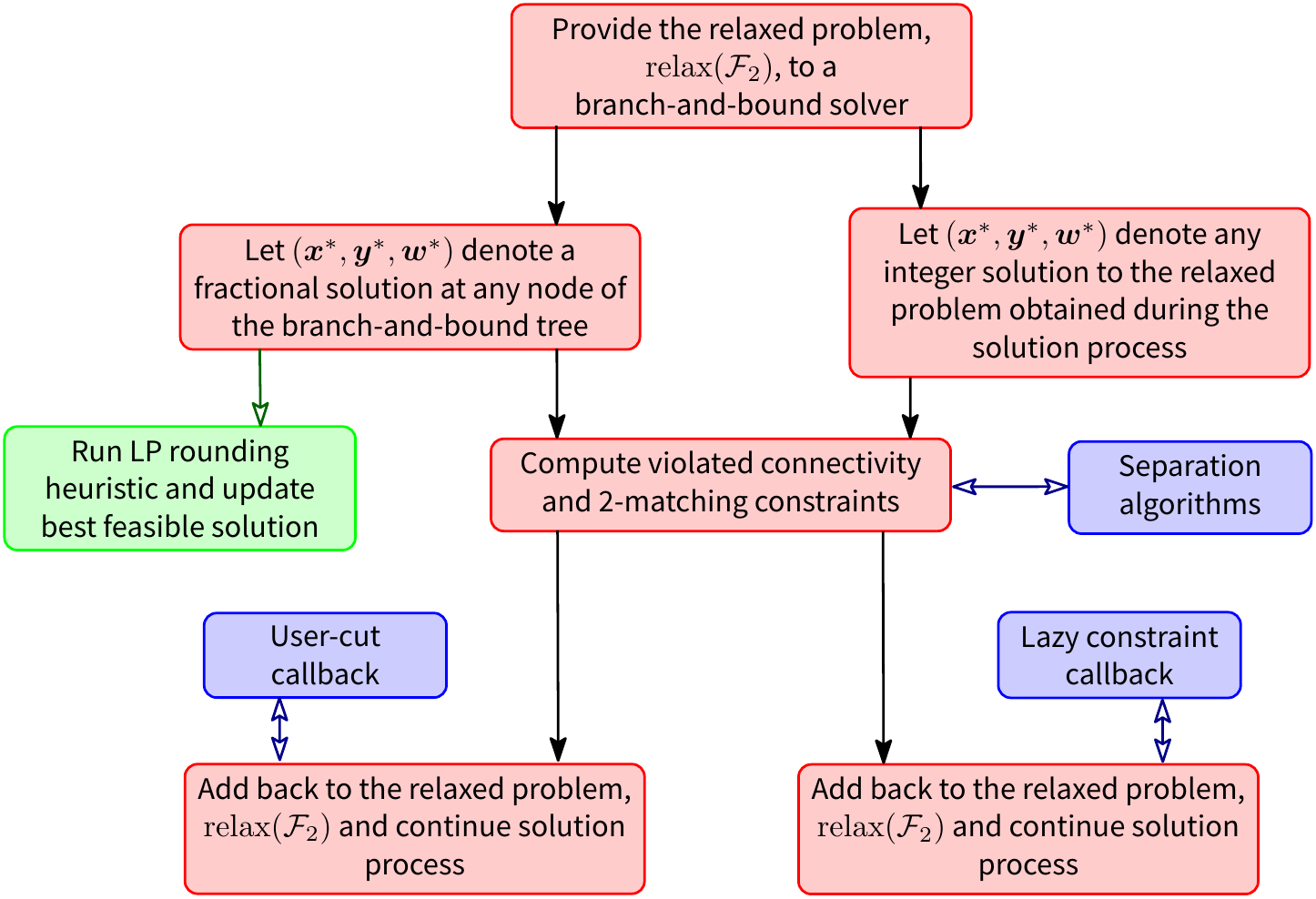}
    \caption{Flow chart of the modifications to change the behaviour of the traditional branch-and-cut algorithm. The LP rounding heuristic, shown in the green box, is used to convert fractional solutions to a feasible CAGVRP solution and is detailed in Sec. \ref{subsec:lprounding}.}
    \label{fig:flowchart-bc}
\end{figure}

The separation algorithms in the forthcoming sections are the algorithms that identify violated connectivity and $2$-matching constraints given integer or fractional solutions to the corresponding relaxed problems. 

\subsection{Separation algorithms} \label{subsec:separation}
This section presents the separation algorithms for identifying violated connectivity constraints and $2$-matching constraints given a solution to the relaxed problems. A separation algorithm is termed as ``exact'' if it is guaranteed to find a violated constraint, if there exists one. Otherwise, the separation algorithm is referred to as a heuristic separation algorithm. \\

\noindent \textit{\underline{\smash{Separation of connectivity constraints in \eqref{eq:xsec} -- \eqref{eq:wsec2}}}}: We first discuss exact separation algorithms for identifying violated connectivity constraints in Eqs. \eqref{eq:xsec} -- \eqref{eq:wsec2} given a fractional solution; the algorithm trivially extends to the case with integer solution. To this end, let $(\bm x^*, \bm y^*, \bm w^*)$ denote a fractional solution. We first construct a support graph ($G^*$) based on the fractional solution; $G^*=(V^*, E^*)$, $V^* = \{i \in T: 0 < y^*_{ii} < 1 \}$ and $E^* = \{e \in E: 0 < x^*_e < 1 \}$. Then, we check if the graph $G^*$ is connected; if it is not connected, each vertex set $S$ corresponding to an individual connected component, such that $0 \notin S$, generates a violated constraint (\ref{eq:xsec}) for each $i \in S$. If $G^*$ is connected, we find the subset of nodes $S$ with minimum value of $\sum_{e \in \delta(S)} x^*_e$. This is done by solving a problem of computing the max-flow on $G^*$, where the capacity of each edge $e$ is set to $x^*_e$. We compute the minimum cut of all pairs of nodes on $G^*$ and consider the node partition $S$ that does not contain $0$. We check if this set violates  constraint \eqref{eq:xsec} for each $i \in S$. If the constraint is violated for any $i$, we add the corresponding inequality to the existing pool of inequalities. 

We use a similar procedure to find violated connectivity constraints \eqref{eq:wsec1} -- \eqref{eq:wsec2} for the UAV sub-tours. Based on the fractional solution $\bm w^*$, we construct graph $G^*_u = (V^*, A^*)$, $A^* =\{ [i, j] \in A: 0<w_{ij}<1 \} $. Similar to the previous procedure we find the violated constraints \eqref{eq:wsec1} -- \eqref{eq:wsec2} defined by $S$, such that the subset $S$ does not contain any targets that are visited by the ground vehicle. \\

\noindent \textit{\underline{\smash{Separation of $2$-matching constraints in \eqref{eq:twomatch}}}}: To find the violated $2$-matching constraints, we follow the heuristic separation algorithm described in \cite{laportersp, fischettigtsp}. We consider each connected component $H$ of $G^*$ as a handle of a possible violated 2-matching inequality, whose two-target teeth correspond to edges in $e\in \delta(H)$ with $x_e^* = 1$. We reject the inequality if the number of teeth is even. If the inequality is violated, then we add this inequality to the pool of violated inequalities. We remark that though exact separation algorithms for the $2$-matching constraints exist in the literature \cite{padberg1982odd}, we utilize heuristic algorithms in favour of their computational effectiveness and algorithmic complexity. 

Next, we present a LP rounding heuristic that is applied to every fractional feasible solution to the CAGVRP to compute good feasible solutions during the branch-and-cut algorithm. It is well known in the literature \cite{laportersp, sundar2015exact,tothvigo} that computationally efficient heuristics that can be applied on the fractional solutions obtained during the execution of the branch-and-cut algorithm can potentially speed up the convergence of the algorithm itself. 

\subsection{LP rounding heuristic} \label{subsec:lprounding}
The rounding heuristic, presented in this section, is applied to every fractional solution obtained at each node of the branch-and-cut tree. To that end, let $(\bm x^*, \bm y^*, \bm w^*)$ denote any fractional solution obtained during the execution of the branch-and-cut algorithm. The algorithm then rounds the fractional assignment variable vector $\bm y^*$ to obtain a feasible set of assignments and utilizes the LKH \cite{lkhheur} heuristic for the TSP to compute paths for the ground vehicle and the UAV. Given $y_{ii}^*$ for each $i \in T$, the heuristic lets $i$ be a ground vehicle stop if $y_{i i}^* \geqslant 0.5$. All the other targets are assumed to be visited by the UAV, and feasible ground vehicle and UAV routes that satisfy the communication restrictions are then constructed. An overview of the heuristic is given by the algorithm \ref{alg:roundheur}. The pseudo-code outputs a heuristic solution vector $(\hat{\bm x}, \hat{\bm y}, \hat{\bm w})$, that corresponds to a feasible CAGVRP solution. 

\begin{algorithm}
    \setstretch{1.1}
  \caption{Pseudo-code of the LP rounding heuristic}
  \label{alg:roundheur}
  \begin{algorithmic}[1]
    \vspace{1ex}
    \Input fractional assignment variable values, $\bm y^*$
    \Output heuristic solution vector, $(\hat{\bm x}, \hat{\bm y}, \hat{\bm w})$
    \State \label{step:1} $S_g = \{i \in T: y_{i i}^* \geqslant 0.5\}$ and $S_a = T \setminus S_g$
    \State $\hat{y}_{ii} \gets 1$, $\forall i \in S_g$ and $\hat{y}_{ii} \gets 0$, $\forall i \in S_a$
    \For{$l \in S_g$} \label{step:3}
    $T^l \gets \emptyset$
    \EndFor
    \For{$u \in S_a$}
    \If{$R_u \cap S_g \neq \emptyset $} \label{step:5}
	\State \label{step:6} $k =  \operatorname{arg\,min}_{v \in R_u \cap S_g} d(u,v)$ \Comment{$d(\cdot,\cdot)$: distance}
	\State \label{step:7} $\hat{y}_{uk} \gets 1$, $T^k \gets T^k \cup \{ u\}$
	\Else
	\State  \label{step:9} $S_g \gets S_g \cup \{ u\}$, $S_a \gets S_a \setminus \{ u\}$
	\EndIf	
    \EndFor
    \State  \label{step:10} $\mathcal T_g \gets \operatorname{LKH}(S_g)$ \Comment{heuristic to solve TSP}
    \For{$l \in S_g$}  \label{step:11}
    $\mathcal T_a^l \gets \operatorname{LKH}(T^l)$
    \EndFor
    \State \label{step:12} $\hat x_e \gets 1$, $\forall e \in \mathcal T_g$
    \For{$l \in S_g$} \label{step:13} $\hat w_{ij} \gets 1$, $\forall [i,j] \in \mathcal T_a^l$
    \EndFor
    \State \label{step:14} $\hat{y}_{ii} \gets 1$, $\forall i \in S_g$ and $\hat{y}_{ii} \gets 0$, $\forall i \in S_a$
  \end{algorithmic}
\end{algorithm}
In Step \ref{step:1} of the pseudo-code in algorithm \ref{alg:roundheur}, $S_g$ and $S_a$ are subsets of the target set $T$ that are to be visited by the ground vehicle and the UAV, respectively; these subsets are constructed by rounding the the fractional assignment variable vector $\bm y^*$. In Step \ref{step:3}, the set $T^l$ represents the subset of targets that are in the UAV sub-tour rooted at the target $l$. In Step \ref{step:5}, $R_u$ is the set of targets that are with in the communication range of target $u$. The Steps \ref{step:6} and \ref{step:7} construct the sets by assigning each target in the set $S_a$ to one of the sets $T^k$, $k\in S_g$. If such an assignment is not possible for a target in the set $S_a$, the target is moved to the set $S_g$ \textit{i.e.}, that target is visited by the ground vehicle (Step \ref{step:9}). The LKH \cite{lkhheur} heuristic (to solve a TSP) is applied to the targets in the ground vehicle set $S_g$ and UAV sub-tour sets $T^l$, $l\in S_g$, in Steps \ref{step:10} and \ref{step:11}, respectively. Finally, the heuristic solution vector $(\hat{\bm x}, \hat{\bm y}, \hat{\bm w})$ are updated using the routes obtained via the LKH heuristic in the Steps \ref{step:12} -- \ref{step:14}. 

In the next section, we present a transformation algorithm to transform any instance of the CAGVRP to the one-in-a-set traveling salesman problem, also referred to in the literature as the generalized traveling salesman problem (GTSP). Once, the problem is transformed to the GTSP, efficient heuristics, available in the literature \cite{Smith2016}, can be used to quickly compute a feasible solution to the corresponding GTSP instance; this in turn can be converted to a feasible solution for the CAGVRP. The heuristic in \cite{Smith2016} a large neighborhood search algorithm for the GTSP which is known to compute close to optimal solutions in very little computation time.

\section{Transformation algorithm} \label{sec:trans_alg}
The NP-hardness of the CAGVRP restricts the use of the MILP model and the branch-and-cut algorithm to compute the optimal solution to small and medium-sized CAGVRP problem instances. Hence, the need to develop efficient heuristics for the CAGVRP. To that end, we present a transformation method that transforms the CAGVRP defined on the graph $G(T,E\bigcup A)$ into GTSP on a transformed graph $\overline G = (\overline V, \overline E)$. For the sake of completeness, we first informally define the GTSP as follows:
\begin{definition}{\it GTSP \cite{noonbean}:} The GTSP is defined on a directed graph in which the vertices have been pre-grouped into $m$ mutually exclusive and exhaustive vertex sets. Arcs are defined only between vertices belonging to different vertex sets with each arc having an associated cost. The GTSP is the problem of finding a minimum cost directed cycle which includes exactly one vertex from each vertex set.
\end{definition}
\subsection{Construction of the transformed graph $\overline G$} \label{subsec:construction}
We will next begin by introducing some definitions that will aid in defining the transformed graph, $\overline G$.
\begin{definition}{\it Configuration:} \label{defn:conf}
Given any pair of targets $i,j \in T$, we refer to $\mathcal C(g_i, a_j)$ as a configuration if the ground vehicle and the UAV are located at the targets $i$ and $j$, respectively. We refer to a configuration as a hub when $i=j$ (\emph{i.e.},  $\mathcal C(g_i, a_i)$). 
\end{definition}
\begin{definition}{\it Feasible configuration:} \label{defn:f_conf}
A configuration $\mathcal C(g_i, a_j)$, where $i,j \in  T$, is feasible if $d(i,j) \leqslant R$ \emph{i.e.}, the distance between the targets $i$ and $j$ is less than or equal to the communication range $R$. If a configuration does not satisfy this condition, then it is referred to as infeasible. 
\end{definition}
\begin{figure}
\centering
\subfigure[Edge between configurations $\mathcal C(g_i, a_i)$ and $\mathcal C(g_k, a_k)$ where both configurations are hubs. The cost of this edge is given by $c_{ik}$.]{\begin{tikzpicture}
\tikzstyle{target}=[circle,draw=green,fill=green!30,inner sep=0.4mm]
\tikzstyle{gv}=[star,star points=4,fill=blue!30,draw=blue,inner sep=0.4mm]
\tikzstyle{av}=[star,star points=6,fill=red!30,draw=red,inner sep=0.4mm]
\tikzstyle{route}=[rectangle, minimum height=0pt, minimum width=6pt, inner sep=0pt, draw]
\draw [thin] (-2,3) rectangle (6,5);
\draw [black,dashed] (2,5) -- (2,3);
\node [draw=none] at (0,3.3) {\small $\mathcal C(g_i,a_i)$};
\node [draw=none] at (4,3.3) {\small $\mathcal C(g_k,a_k)$};
\draw (-1,4.5) node (from_1) [target] {\small $i$};
\draw (1,4.5) node (to_1) [target] {\small $k$};
\draw (3,4.5) node (from_2) [target] {\small $i$};
\draw (5,4.5) node (to_2) [target] {\small $k$};
\draw [blue,->,thick] (from_2) -- (to_2);
\node [gv] at (-1.2,4.2) {};
\node [av] at (-0.8,4.2) {};
\node [gv] at (4.8,4.2) {};
\node [av] at (5.2,4.2) {};
\node [target,inner sep=0.7mm,label={[label distance=0.1cm]0:{\small Target}}] at (-1.8,5.3) {};
\node [gv,label={[label distance=0.1cm]0:{\small GV}}] at (-0.3,5.3) {};
\node [av,label={[label distance=0.1cm]0:{\small UAV}}] at (0.8,5.3) {};
\node [route,color=blue,label={[label distance=0.1cm]0:{\small GV path}}] at (2.1,5.3) {};
\node [route,color=red,label={[label distance=0.1cm]0:{\small UAV path}}] at (3.9,5.3) {};
\end{tikzpicture}\label{fig:gpcons1}}
\subfigure[Edge between configurations $\mathcal C(g_i, a_j)$ and $\mathcal C(g_i, a_{\ell})$. The first configuration can be a hub, while the second configuration cannot. The cost of the edge is given by $d_{j\ell}$.]{\begin{tikzpicture}
\tikzstyle{target}=[circle,draw=green,fill=green!30,inner sep=0.4mm]
\tikzstyle{gv}=[star,star points=4,fill=blue!30,draw=blue,inner sep=0.4mm]
\tikzstyle{av}=[star,star points=6,fill=red!30,draw=red,inner sep=0.4mm]
\tikzstyle{route}=[rectangle, minimum height=0pt, minimum width=6pt, inner sep=0pt, draw]
\draw [thin] (-2,3) rectangle (6,5);
\draw [black,dashed] (2,5) -- (2,3);
\node [draw=none] at (-1,3.3) {\small $\mathcal C(g_i,a_j)$};
\node [draw=none] at (3,3.3) {\small $\mathcal C(g_i,a_{\ell})$};
\draw (-1,4.5) node (gv_1) [target] {\small $i$};
\draw (0.5,4.5) node (from_1) [target] {\small $j$};
\draw (1,3.5) node (to_1) [target] {\small $\ell$};
\draw (3,4.5) node (gv_2) [target] {\small $i$};
\draw (4.5,4.5) node (from_2) [target] {\small $j$};
\draw (5,3.5) node (to_2) [target] {\small $\ell$};
\draw [red,->,thick] (from_2) -- (to_2);
\node [gv] at (-1.4,4.5) {};
\node [gv] at (2.6,4.5) {};
\node [av] at (0.1,4.5) {};
\node [av] at (4.6,3.5) {};
\node [target,inner sep=0.7mm,label={[label distance=0.1cm]0:{\small Target}}] at (-1.8,5.3) {};
\node [gv,label={[label distance=0.1cm]0:{\small GV}}] at (-0.3,5.3) {};
\node [av,label={[label distance=0.1cm]0:{\small UAV}}] at (0.8,5.3) {};
\node [route,color=blue,label={[label distance=0.1cm]0:{\small GV path}}] at (2.1,5.3) {};
\node [route,color=red,label={[label distance=0.1cm]0:{\small UAV path}}] at (3.9,5.3) {};
\end{tikzpicture}\label{fig:gpcons2}}
\subfigure[This edge includes two maneuvers in a feasible CAGVRP solution; the UAV traversing the arc from $i$ to $j$ and the ground vehicle carrying the UAV and traversing the edge $(j,k)$. The second configuration in this edge has to be a hub and the first configuration should not be a hub in this type of edge. The cost of this edge is $d_{ij}+c_{jk}$.]{\begin{tikzpicture}
\tikzstyle{target}=[circle,draw=green,fill=green!30,inner sep=0.4mm]
\tikzstyle{gv}=[star,star points=4,fill=blue!30,draw=blue,inner sep=0.4mm]
\tikzstyle{av}=[star,star points=6,fill=red!30,draw=red,inner sep=0.4mm]
\tikzstyle{route}=[rectangle, minimum height=0pt, minimum width=6pt, inner sep=0pt, draw]
\draw [thin] (-2,3) rectangle (6,5);
\draw [black,dashed] (2,5) -- (2,3);
\node [draw=none] at (-1,3.3) {\small $\mathcal C(g_i,a_j)$};
\node [draw=none] at (3,3.3) {\small $\mathcal C(g_k,a_k)$};
\draw (-1,4.5) node (u1) [target] {\small $i$};
\draw (0.5,4.5) node (u2) [target] {\small $j$};
\draw (1,3.5) node (u3) [target] {\small $k$};
\draw (3,4.5) node (v1) [target] {\small $i$};
\draw (4.5,4.5) node (v2) [target] {\small $j$};
\draw (5,3.5) node (v3) [target] {\small $k$};
\draw [red,->,thick] (v1) -- (v2);
\draw [blue,->,thick] (v2) -- (v3);
\node [gv] at (0.9,4.5) {};
\node [gv] at (5.4,3.7) {};
\node [av] at (5.4,3.3) {};
\node [av] at (-1.4,4.5) {};
\node [target,inner sep=0.7mm,label={[label distance=0.1cm]0:{\small Target}}] at (-1.8,5.3) {};
\node [gv,label={[label distance=0.1cm]0:{\small GV}}] at (-0.3,5.3) {};
\node [av,label={[label distance=0.1cm]0:{\small UAV}}] at (0.8,5.3) {};
\node [route,color=blue,label={[label distance=0.1cm]0:{\small GV path}}] at (2.1,5.3) {};
\node [route,color=red,label={[label distance=0.1cm]0:{\small UAV path}}] at (3.9,5.3) {};
\end{tikzpicture}\label{fig:gpcons3}}
\caption{Possible edges in the graph $\overline G$. GV abbreviates the ground vehicle.}
\label{fig:gpcons}
\end{figure}
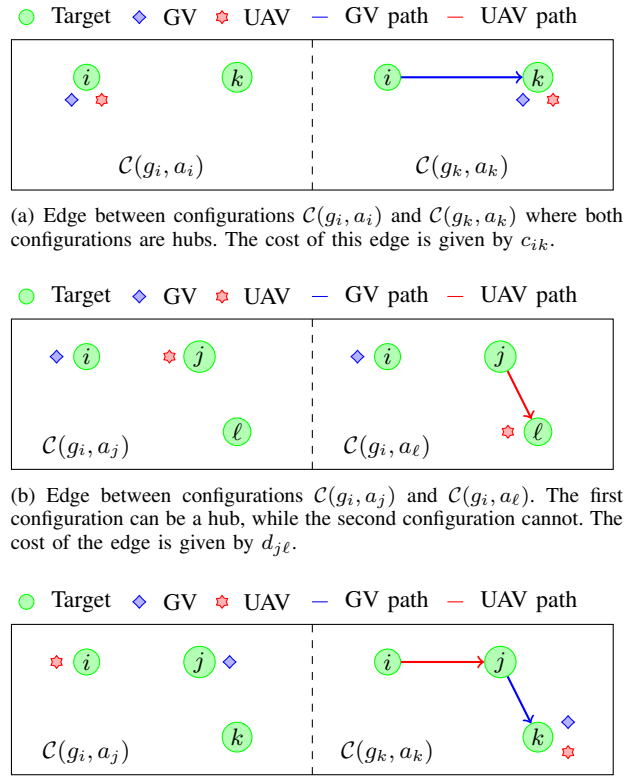

The vertex set, $\overline V$, in the transformed graph, $\overline G$, is defined to be the set of all feasible configurations. We remark that if the communication range was infinite, then $|\overline V| = |T|^2$. But in practice, the communication range could be much less than the maximum distance between any two targets, resulting in $|\overline V| \ll |T|^2$. As for the edge set $\overline E$ of the transformed graph $\overline G$, an edge between pair of vertices corresponding to two feasible configurations $\mathcal C(g_i, a_j)$ and $\mathcal C(g_k, a_l)$ is added if one of the following conditions holds.
\begin{enumerate}
    \item The configurations $\mathcal C(g_i, a_j)$ and $\mathcal C(g_k, a_{\ell})$ are distinct hubs \emph{i.e.}, $i=j$, $k={\ell}$, and $i\neq k$. In this case, the travel cost for the edge joining the two configuration is given by the ground vehicle's travel cost to go from vertex $i$ to $k$ \emph{i.e.}, $c_{ik}$, where $(i,k) \in E$ (see Fig. \ref{fig:gpcons1}).
    \item The ground vehicle is located at the same target in both configurations, the UAV travels from target $j$ to $\ell$, and the configuration $\mathcal C(g_k, a_{\ell})$ is not a hub \emph{i.e.}, $i=k$, $j \neq \ell$, and $k\neq \ell$ (see Fig. \ref{fig:gpcons2}). The cost of the edge in this case is given by $d_{j{\ell}}$, the cost incurred by the UAV to traverse the arc $[j,{\ell}] \in A$. 
    \item The first configuration $\mathcal C(g_i, a_j)$ is not a hub, while the second configuration $\mathcal C(g_k, a_k)$ is a hub with the ground vehicle position changing between the two configurations \emph{i.e.} $i\neq k$ (see Fig. \ref{fig:gpcons3}). An edge between these two configurations in the transformed graph corresponds to two maneuvers in a feasible CAGVRP solution, the first maneuver being the UAV traversing the arc $[i,j] \in A$ and the second one being the ground vehicle carrying the UAV and traversing the edge $(j,k) \in E$. Hence, the cost associated with this edge is given by $d_{ij} + c_{jk}$. 
\end{enumerate}
We remark that the graph $\overline G$ is not a complete graph. Nevertheless, $\overline G$ can be converted to a complete graph by adding the remaining edges with an arbitrary high travel cost. It remains to define the exhaustive partition of the vertex sets for the GTSP on the transformed graph. To that end, for each target $t \in T$, we define the subset of vertices $V_t \subseteq \overline V$ as $V_t\triangleq \{\mathcal C(g_i, a_j) \in \overline V : j = t\}$. It is easy to observe that all the sets $V_t$, $t\in T$, form an exhaustive partition of the set $\overline V$ \textit{i.e.}, $\overline V = \bigcup_{t \in T} V_t$ and $V_i \bigcap V_j = \emptyset$, for every distinct $i,j \in T$. The transformed graph $\overline G=(\overline V, \overline E)$ together with the sets $V_t$, $t \in T$ define the graph on which the GTSP is solved. 

\subsection{Proof of correctness of the transformation algorithm} \label{subsec:tranformation-proof}
Now, in the following two propositions, we present the proof of correctness of the transformation algorithm that transforms the CAGVRP on the graph $G$ to an equivalent GTSP on the transformed graph $\overline G$.

\begin{proposition}\label{prop:gtsp}
Any feasible solution, $\mathcal E_1$, to the GTSP defined on the transformed graph $\overline G = (\overline V, \overline E)$ with a cost of $\mathcal O$ has a corresponding feasible CAGVRP solution on the graph $G = (V,E\bigcup A)$ with the same cost, $\mathcal O$.
\end{proposition}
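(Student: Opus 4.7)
The plan is to exhibit an explicit decoding of the GTSP cycle $\mathcal E_1$ into a CAGVRP solution and then show that both feasibility and cost are preserved. I would fix an orientation of $\mathcal E_1$ and walk along it; each edge of the cycle falls into exactly one of the three types constructed in Section~\ref{subsec:construction}, and each type directly prescribes a small block of CAGVRP maneuvers. A Type~1 edge between two hubs is read as the ground vehicle traversing an edge of $E$; a Type~2 edge is read as the UAV traversing a single arc of $A$ while the ground vehicle rests at the corresponding hub target; and a Type~3 edge is read as the closing arc of a UAV sub-tour together with a ground-vehicle edge that relocates the pair to the next stop. Concatenating these maneuver blocks in cycle order produces candidate vectors $\hat{\bm x}$, $\hat{\bm w}$, and $\hat{\bm y}$.

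Next I would verify that the decoded solution satisfies every constraint of $\mathcal F_2$. The GTSP property that each set $V_t$ is visited exactly once guarantees that every target appears as the UAV coordinate of exactly one configuration in $\mathcal E_1$, which yields the assignment constraints~\eqref{eq:assignment1}: if that configuration is the hub $\mathcal C(g_t,a_t)$ we set $\hat y_{tt}=1$, otherwise it is a non-hub $\mathcal C(g_\tau,a_t)$ and we set $\hat y_{t\tau}=1$. Since every configuration in $\mathcal E_1$ is by definition feasible, each such assignment satisfies $d(t,\tau)\leqslant R$, so the auxiliary-cost term in~\eqref{eq:obj} vanishes and the communication range is respected. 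The sub-sequence of hubs in $\mathcal E_1$ provides the ground-vehicle closed tour, which delivers the degree constraint~\eqref{eq:degx} and the connectivity constraints~\eqref{eq:xsec}; the UAV coordinates of the non-hub configurations sitting between two consecutive hubs form a closed sub-tour rooted at the ground-vehicle location of those hubs, yielding \eqref{eq:outdegw}, \eqref{eq:indegw}, and the flow-connectivity constraints \eqref{eq:wsec1}--\eqref{eq:wsec2}.

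Cost preservation is then immediate: the edge costs in $\overline G$ were defined precisely to equal the CAGVRP cost of the block of maneuvers encoded by each edge, so summing edge costs along $\mathcal E_1$ reproduces $\sum_{e \in E} c_e \hat x_e + \sum_{[i,j]\in A} d_{ij} \hat w_{ij}$, i.e., the objective~\eqref{eq:obj} evaluated on the decoded solution. Hence the CAGVRP solution so obtained has cost exactly $\mathcal O$.

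The main obstacle is the structural lemma that the non-hub configurations between two consecutive hubs of $\mathcal E_1$ all share the same ground-vehicle coordinate, so that each inter-hub block really does assemble into a sub-tour rooted at the corresponding hub; this follows by walking the cycle and observing that the ground-vehicle coordinate can change only on a Type~1 or Type~3 edge, both of which terminate at a hub. A secondary subtlety is the base-station requirement~\eqref{eq:assignment2}: to guarantee $\hat y_{00}=1$ it suffices to declare $V_0$ in the transformation to contain only the hub $\mathcal C(g_0,a_0)$, forcing $0$ to appear as a ground-vehicle stop in every GTSP cycle. Once these two structural points are settled, the rest of the verification is routine bookkeeping.
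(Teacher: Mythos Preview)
Your proposal is correct and follows essentially the same approach as the paper---decode each GTSP edge into the corresponding block of CAGVRP maneuvers and observe that feasibility and cost are preserved---though your write-up is considerably more thorough than the paper's brief argument, which simply asserts that the edge-to-maneuver mapping yields a feasible CAGVRP solution of the same cost. In particular, the structural lemma that the ground-vehicle coordinate can only change on an edge terminating at a hub, and the $V_0$ subtlety needed for constraint~\eqref{eq:assignment2}, are points the paper either glosses over or handles by bare assertion (it simply declares that $\mathcal E_1$ ``starts at the configuration $\mathcal C(g_0,a_0)$'').
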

\begin{proof}
The feasible solution $\mathcal E_1$ to the GTSP starts at the configuration $\mathcal C(g_0, a_0)$, visits a subset of configurations in the vertex set $\overline V$ such that exactly one configuration in each set $V_t$, $t\in T$ is visited, and returns to the configuration $\mathcal C(g_0,a_0)$. The definition of the partition sets $V_t$, $t\in T$, guarantees that the UAV is physically present at every target $i \in T$, either with the ground vehicle (corresponds to a configuration that is a hub) or separately from the ground vehicle. Furthermore, all the vertices in the transformed graph correspond to feasible configurations \emph{i.e.}, the positions of the UAV and the ground vehicle are such that they satisfy the communication constraints in the CAGVRP. Hence, mapping the sequence of edges in the solution $\mathcal E_1$ of the GTSP to the corresponding sequence of maneuvers of the UAV and the ground vehicle on the graph $G$ would yield a feasible CAGVRP solution. The manner in which the edges are constructed and the cost function is defined for each edge in the set $\overline E$ ensure that the cost of the CAGVRP solution, constructed in the aforementioned way, equals $\mathcal O$, completing the proof.
\end{proof}

\begin{proposition}\label{prop:cagvrp}
A feasible solution to the GTSP on the transformed graph $\overline G(\overline V,\overline E)$, $\mathcal E_1$, can be constructed from any feasible solution $\mathcal E_2$ to the CAGVRP on the graph $G(V,E)$. Furthermore, the construction can be performed such that the cost of the solution $\mathcal E_1$ on the graph $\overline G$ is equal to the cost of the solution $\mathcal E_2$ on $G$.
\end{proposition}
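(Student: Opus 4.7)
The plan is a direct constructive proof that mirrors Proposition~\ref{prop:gtsp} in reverse: I read off the sequence of maneuvers in $\mathcal{E}_2$, list the corresponding configurations in $\overline V$, and then verify that this sequence is a GTSP tour on $\overline G$ of equal cost. First I would decompose $\mathcal{E}_2$ into its structural pieces: a ground-vehicle route $s_0 = 0, s_1, \ldots, s_{m-1}, s_m = 0$ and, for each hub $s_k$, a UAV sub-tour $s_k \to t_1^k \to \cdots \to t_{p_k}^k \to s_k$ (possibly trivial, with $p_k = 0$, when the UAV does not deploy at that hub). Feasibility of $\mathcal{E}_2$ guarantees that every target $t \in T$ appears exactly once, either as some hub $s_k$ or as some $t_\ell^k$.

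The candidate GTSP tour $\mathcal{E}_1$ is then the explicit sequence
\begin{equation*}
\mathcal{C}(g_{s_0}, a_{s_0}),\; \mathcal{C}(g_{s_0}, a_{t_1^0}),\; \ldots,\; \mathcal{C}(g_{s_0}, a_{t_{p_0}^0}),\; \mathcal{C}(g_{s_1}, a_{s_1}),\; \ldots,\; \mathcal{C}(g_{s_{m-1}}, a_{t_{p_{m-1}}^{m-1}}),\; \mathcal{C}(g_0, a_0),
\end{equation*}
where each hub with $p_k = 0$ simply contributes the single hub configuration $\mathcal{C}(g_{s_k}, a_{s_k})$. I would then establish three properties: (i) every listed configuration is feasible, because the CAGVRP communication constraint forces $d(s_k, t_\ell^k) \le R$ and $d(s_k, s_k) = 0$; (ii) each consecutive pair matches one of the three edge patterns of $\overline G$ in Fig.~\ref{fig:gpcons1}--\ref{fig:gpcons3}---sharing the $g$-coordinate yields a type-2 edge, two consecutive hub configurations (arising when $p_k = 0$) are joined by a type-1 edge, and a non-hub immediately followed by a new hub is joined by a type-3 edge; and (iii) exactly one configuration lies in each partition block $V_t$, since every target $t \in T$ appears once as the $a$-coordinate of exactly one listed configuration. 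The walk closes at $\mathcal{C}(g_0, a_0)$ because $s_0 = s_m = 0$, so $\mathcal{E}_1$ is a valid GTSP tour.

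Finally, I would match costs edge-by-edge. Within each hub $s_k$, the type-2 edges account exactly for the UAV's outbound sub-tour legs $s_k \to t_1^k \to \cdots \to t_{p_k}^k$; the type-1 or type-3 edge joining hub $s_k$ to hub $s_{k+1}$ accounts simultaneously for the UAV's return leg $t_{p_k}^k \to s_k$ (degenerate when $p_k = 0$) and the ground-vehicle traversal $s_k \to s_{k+1}$, which by the construction of $\overline E$ is precisely the weight of that edge in $\overline G$. Summing over $k$ then recovers every UAV arc cost and every ground-vehicle edge cost contained in $\mathcal{E}_2$, yielding cost equality. The main obstacle is handling the type-3 transitions: each such edge packages \emph{two} CAGVRP maneuvers into a single edge weight, so the bijection between edges of $\mathcal{E}_1$ and maneuvers of $\mathcal{E}_2$ is not one-to-one, and careful boundary-case bookkeeping at the end of each sub-tour (and for the trivial-sub-tour case) is required to align the two cost sums.
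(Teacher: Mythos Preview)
Your proposal is correct and follows essentially the same constructive strategy as the paper: decompose $\mathcal{E}_2$ into a sequence of maneuvers, map each maneuver to one of the three edge types in $\overline G$, and verify that the resulting configuration sequence visits exactly one vertex per block $V_t$ with matching cost. Your version is simply more explicit---you write out the configuration sequence hub-by-hub and track the boundary cases (trivial sub-tours, the two-maneuver packaging in type-3 edges) in more detail than the paper, which argues the same points at a higher level by citing the three maneuver types and the degree constraints \eqref{eq:outdegw}--\eqref{eq:indegw}.
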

\begin{proof}
Let $\mathcal E_2$ be any feasible solution to the CAGVRP. Then, $\mathcal E_2$ can be subdivided into a sequence of maneuvers, each of which could be from the following three maneuvers:
\begin{enumerate}
    \item the ground vehicle carries the UAV and travels from one target to another, 
    \item the ground vehicle stops at a target $i\in T$ and the UAV travels from a target $j\in T$ ($j$ can be equal to $i$) to a target $k \in T$ such that the targets $j$ and $k$ are within the communication distance of $R$ units from the target $i$ \emph{i.e.}, $d(i,j) \leqslant R$ and $d(i,k) \leqslant R$,
    \item the UAV returns to the ground vehicle that stopped at target $i\in T$ from another target $j \in T$, and the ground vehicle carrying the UAV travels to target $k \in T$. 
\end{enumerate}
Notice that, the positions of the vehicles before and after each of the above maneuvers correspond to a feasible configuration in the set $\overline V$, and that each maneuver defined above corresponds to one of the three types of edges in the edge set $\overline E$. Let $\mathcal C_G$ denote all the set of configurations that occur in the solution $\mathcal E_2$. Any feasible solution, $\mathcal E_2$, to the CAGVRP satisfies the Eqs. \eqref{eq:outdegw} and \eqref{eq:indegw}, which ensures for every target $t \in T$, there exists one configuration in $\mathcal C_G$, where the UAV is at the target $t$. Hence, $\mathcal C_G$ satisfies the condition $\mathcal C_G \bigcap V_t = 1$ for every $t \in T$ ($V_t$ is the partition set associated with $t$ in the graph $\overline G$). Therefore, mapping the sequence of maneuvers obtained in $\mathcal E_2$ to the corresponding configurations and edges in the transformed graph $\overline G$ would yield a feasible GTSP solution $\mathcal E_1$. The construction of the edge set $\overline E$, with their cost function, will ensure that the cost of $\mathcal E_1$ in $\overline G$ is equal to the cost of the solution $\mathcal E_2$ in the graph $G$.
\end{proof}

To summarize the propositions \ref{prop:gtsp} and \ref{prop:cagvrp}, for any solution to the GTSP, there is a corresponding feasible solution to the CAGVRP that has the same cost; and for any solution to the CAGVRP, there exists an equivalent feasible solution to the transformed GTSP. Therefore, an optimal solution to the GTSP maps to an optimal solution to the CAGVRP.

\subsection{An illustration of the transformation algorithm} \label{subsec:illustration}
For the sake of clarity, we now present a feasible solution to a CAGVRP instance with its corresponding GTSP solution in the Fig. \ref{fig:cag_gtsp}. In the instance shown in Fig. \ref{fig:cagsoln}, the targets $0$ and $4$ are not within the communication range of any target. The cost of the CAGVRP solution in Fig. \ref{fig:cagsoln} is given by $(c_{01}+c_{14}+c_{40} + d_{12}+d_{23}+d_{31})$. The corresponding feasible solution on the transformed graph $\overline G = (\overline V, \overline E)$ is shown in the Fig. \ref{fig:gtspsoln} along with the costs of the edges. 
\begin{figure}[htpb]
\centering{}
\subfigure[A feasible solution to the CAGVRP]{\begin{tikzpicture}[scale=0.9]
\draw [thin] (-2,-0.5) rectangle (6.5,5.5);
\SetVertexSimple[Shape=circle, FillColor=green!30, MinSize=0.5pt, LineColor=green]
\tikzset{VertexStyle/.style = {shape=circle,draw, inner sep=0.5mm,color=green,fill=green!30}}
\SetVertexLabel
\SetGraphUnit{1.5}
\Vertex[L={\color{black}\small $0$}]{1}
\NOEA[unit = 3.5,L={\color{black}\small $4$}](1){2}
\WE[unit=2.5,L={\color{black}\small $1$}](2){3}
\tikzset{EdgeStyle/.style = {<-,blue,thick}}
\Edges(1,2,3,1)
\NOWE[L={\color{black}\small $2$}](3){4}
\NO[L={\color{black}\small $3$}](3){5}
\tikzset{EdgeStyle/.style={->,red,thick}}
\Edges(3,4,5,3)

\tikzstyle{target}=[circle,draw,color=green,fill=green!30,inner sep=0.7mm]
\tikzstyle{route}=[rectangle, minimum height=0pt, minimum width=6pt, inner sep=0pt, draw]
\matrix [draw=none,below left] at (6,2) {
  \node [target,label={[label distance=0.1cm]0:{\small Target}}] {}; \\
  \node [route,draw=red,label={[label distance=0.1cm]0:{\small UAV path}}] {}; \\
  \node [route,draw=blue,label={[label distance=0.1cm]0:{\small Ground vehicle path}}] {}; \\
};
\end{tikzpicture}\label{fig:cagsoln}}
\subfigure[GTSP solution constructed from the feasible CAGVRP solution in \ref{fig:cagsoln}.]{\begin{tikzpicture}
\tikzstyle{target}=[rectangle,draw=orange,fill=orange!30,inner sep=0.4mm]
\tikzstyle{gv}=[star,star points=4,fill=blue!30,draw=blue,inner sep=0.4mm]
\tikzstyle{av}=[star,star points=6,fill=red!30,draw=red,inner sep=0.4mm]
\tikzstyle{route}=[rectangle, minimum height=0pt, minimum width=6pt, inner sep=0pt, draw]
\draw [thin] (0,0) rectangle (8,7.5);
\node [target] at (2,1) (0_0) {\small $\mathcal C(g_0, a_0)$};
\draw [dash dot,very  thin] (1.25,0.7) rectangle (2.75,1.3);
\node  [draw=none] at (2,0.5) {\small $V_0$};
\node [target] at (1.5,3) (2_1) {\small $\mathcal C(g_2, a_1)$};
\node [target] at (3.1,3) (1_1) {\small $\mathcal C(g_1, a_1)$};
\node [target] at (2,3.6) (3_1) {\small $\mathcal C(g_3, a_1)$};
\draw [dash dot, very thin] (0.75,2.7) rectangle (3.85,3.9);
\node  [draw=none] at (1.5,4.1) {\small $V_1$};
\node [target] at (1.5,6) (2_2) {\small $\mathcal C(g_2, a_2)$};
\node [target] at (3.1,6) (1_2) {\small $\mathcal C(g_1, a_2)$};
\node [target] at (2,6.6) (3_2) {\small $\mathcal C(g_3, a_2)$};
\draw [dash dot, very thin] (0.75,5.7) rectangle (3.85,6.9);
\node  [draw=none] at (2,7.1) {\small $V_2$};
\node [target] at (5,4.5) (1_3) {\small $\mathcal C(g_1, a_3)$};
\node [target] at (6.6,4.5) (2_3) {\small $\mathcal C(g_2, a_3)$};
\node [target] at (6,5.1) (3_3) {\small $\mathcal C(g_3, a_3)$};
\draw [dash dot, very thin] (4.25,4.2) rectangle (7.35,5.4);
\node  [draw=none] at (6,5.6) {\small $V_3$};
\node [target] at (6,2) (4_4) {\small $\mathcal C(g_4, a_4)$};
\draw [dash dot,very  thin] (5.25,1.7) rectangle (6.75,2.3);
\node  [draw=none] at (6,1.5) {\small $V_4$};
\draw [->,thick] (0_0) -- node [midway,draw=none,label={[label distance=0.05cm]0:{\small $c_{01}$}}] {} (1_1);
\draw [->,thick] (1_1) -- node [midway,draw=none,label={[label distance=-0.8cm]0:{\small $d_{12}$}}] {} (1_2);
\draw [->,thick] (1_2) -- node [midway,draw=none,label={[label distance=-0.8cm]0:{\small $d_{23}$}}] {} (1_3);
\draw [->,thick] (1_3) -- node [midway,draw=none,label={[label distance=0.05cm]0:{\small $(d_{13}+c_{14})$}}] {} (4_4);
\draw [->,thick] (4_4) -- node [below] {\small $c_{40}$} (0_0);
\end{tikzpicture}\label{fig:gtspsoln}}
\caption{A solution to the GTSP on $\overline G$ constructed from the solution of the CAGVRP. The costs of both the solutions in their respective graphs is given by $(c_{01}+c_{14}+c_{40} + d_{12}+d_{23}+d_{31})$.}
\label{fig:cag_gtsp}
\end{figure}
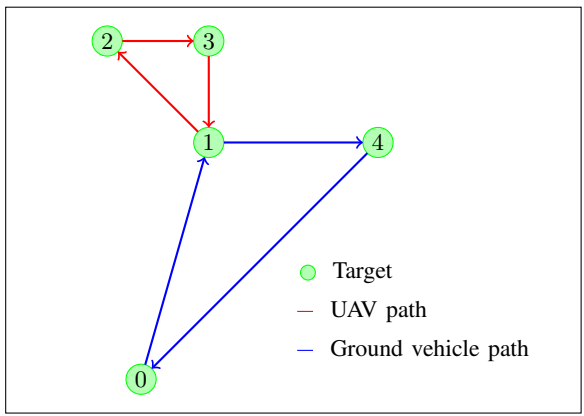
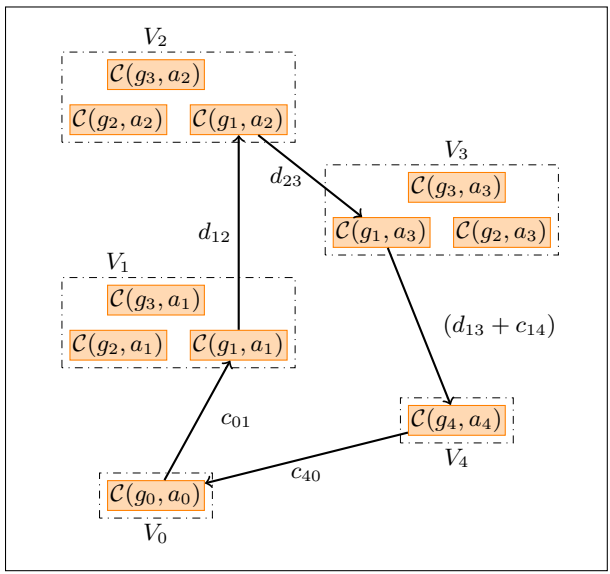
For each target $i$, the set $V_i$ in Fig. \ref{fig:gtspsoln} define the partition of the vertex set $\overline V$. The two vehicles are located at target $0$ initially, and they together travel to target $1$. This corresponds to a maneuver from $\mathcal{C}(g_0,u_0)$ to $\mathcal{C}(g_1,u_1)$ in Fig. \ref{fig:gtspsoln}. At target $1$, the UAV is deployed and it visits targets $2$ and $3$ (in that order), while ground vehicle is at target $1$. The corresponding configurations visited in the GTSP solution are $\mathcal{C}(g_1,u_2)$ and $\mathcal{C}(g_1,u_3)$. The UAV then returns to the ground vehicle at target $1$, and they together visit target $4$ and return to target $0$. In the next section, we present extensive computational results to corroborate the effectiveness of all the algorithms proposed for the CAGVRP. 

\section{Computational Results} \label{sec:results}
The branch-and-cut algorithm that is used to solve the CAGVRP to optimality was implemented using CPLEX $12.6$ \cite{cplexmanual} with C++ API. The internal CPLEX cut-generation routines were disabled and CPLEX was used only to manage the enumeration tree. The transformation algorithm was also implemented using C++, and the heuristic presented in \cite{Smith2016} was used to solve the GTSP on the transformed graph. All the simulations were performed on a Macbook Pro with an Intel Core i5, 2.7 GHz processor. The performance of the algorithms were tested on three different classes of randomly generated instances. The procedure for generating the instances is given below. A time limit of 6 hours was imposed on every run of the branch-and-cut algorithm to obtain an optimal solution. \\

\noindent \textit{\underline{\smash{Instance generation:}}} Three classes of randomly generated CAGVRP instances (class A, class B, and class C) were used to test the algorithms presented in this article. For all the classes of the instances the locations of the targets were randomly generated in a $100 \times 100$ grid. A communication range of $25$ units was used for all the instances. Instances in classes A and B consisted of small- and medium-sized instances \emph{i.e.}, $|T| \in \{20, 30, 40, 50\}$ and the instances in class C consisted of large-sized test instances \emph{i.e.}, $|T| \in \{60, 70, 80\}$. For all the instances, the travel cost for the ground vehicle to travel between any pair of targets $i,j\in T$ is chosen to be equal to the Euclidean distance, $l_{ij}$, between the two targets; and the travel cost of the UAV is set to $\alpha\cdot l_{ij}$, where $\alpha$ is a scaling factor in the set $\{0.1, 0.2, 0.3\}$. The target locations for the class A and class C instances were randomly generated from uniform distribution in the $100 \times 100$ grid. As for the instances in class B, the target locations were randomly chosen to form clusters in the grid with a lower bound on the separation distance between any pair of targets belonging to two different clusters. For every class and every $|T|$ in the corresponding class, $20$ instances were generated. In total, for each value of $\alpha \in \{0.1, 0.2, 0.3\}$, there were $80$, $80$, and $60$ instances corresponding to the classes A, B, and C respectively.  

\subsection{Performance of the branch-and-cut algorithm} \label{subsec:bandc_times}
We first examine the scalability aspect of the branch-and-cut algorithm in computing an optimal solution for the instances in classes A and B. To that end, the Table \ref{tab:succ} presents the number of instances out of $20$, the branch-and-cut algorithm is able to compute the optimal solution within the computation time limit of 6 hours. It is clear from the table that the branch-and-cut algorithm is capable of solving instances with up to $40$ targets effectively, and the $50$-target instances with a little difficulty. 

\begin{table}[htbp]
\centering
\caption{Number of instances out of $20$ that are solved to optimality.}
\begin{tabular}{ccccc}
\toprule
Instance class & $|T|$ & $\alpha = 0.1$ & $\alpha = 0.2$ & $\alpha = 0.3$ \\
\midrule 
\multirow{4}{*}{A} & 20 & 20 & 20 & 20 \\
& 30 & 20 & 20 & 20 \\
& 40 & 20 & 19 & 19 \\
& 50 & 14 & 14 & 7 \\
\midrule 
\multirow{4}{*}{B} & 20 & 20 & 20 & 20 \\
& 30 & 20 & 20 & 20 \\
& 40 & 17 & 18 & 13 \\
& 50 & 17 & 17 & 17 \\
\bottomrule
\end{tabular}
\label{tab:succ}
\end{table}

The run time for the instances is provided by the box plots shown in Fig. \ref{fig:box}. These box plots show the computation time for all the class A and class B instances that were solved to optimality within the stipulated time limit. The computation times indicate that instances with up to $40$ targets could be solved to optimality by the branch-and-cut algorithm in around $1000$ seconds and it takes around $5000$-$10000$ seconds to solve the $50$-target instances.  In general, we observe that the instances with a scale factor of $\alpha = 0.3$ are more difficult to solve, and need more computation time. This is expected for the following reason: suppose the cost of travel by the UAV is zero, then the algorithm would try to assign as many targets as possible to the UAV. As the cost of travel by the UAV is increased, it has to find the right partitioning of the targets to be assigned to the ground vehicle and the UAV, that minimizes the total cost. This additional partitioning decisions could potentially increase the combinatorial complexity of finding the optimal solution.

\begin{figure}[htbp]
    \centering
    \subfigure[Computation time for the instances in class A that were solved to optimality within the 6-hour limit.]{\includegraphics[scale=0.6]{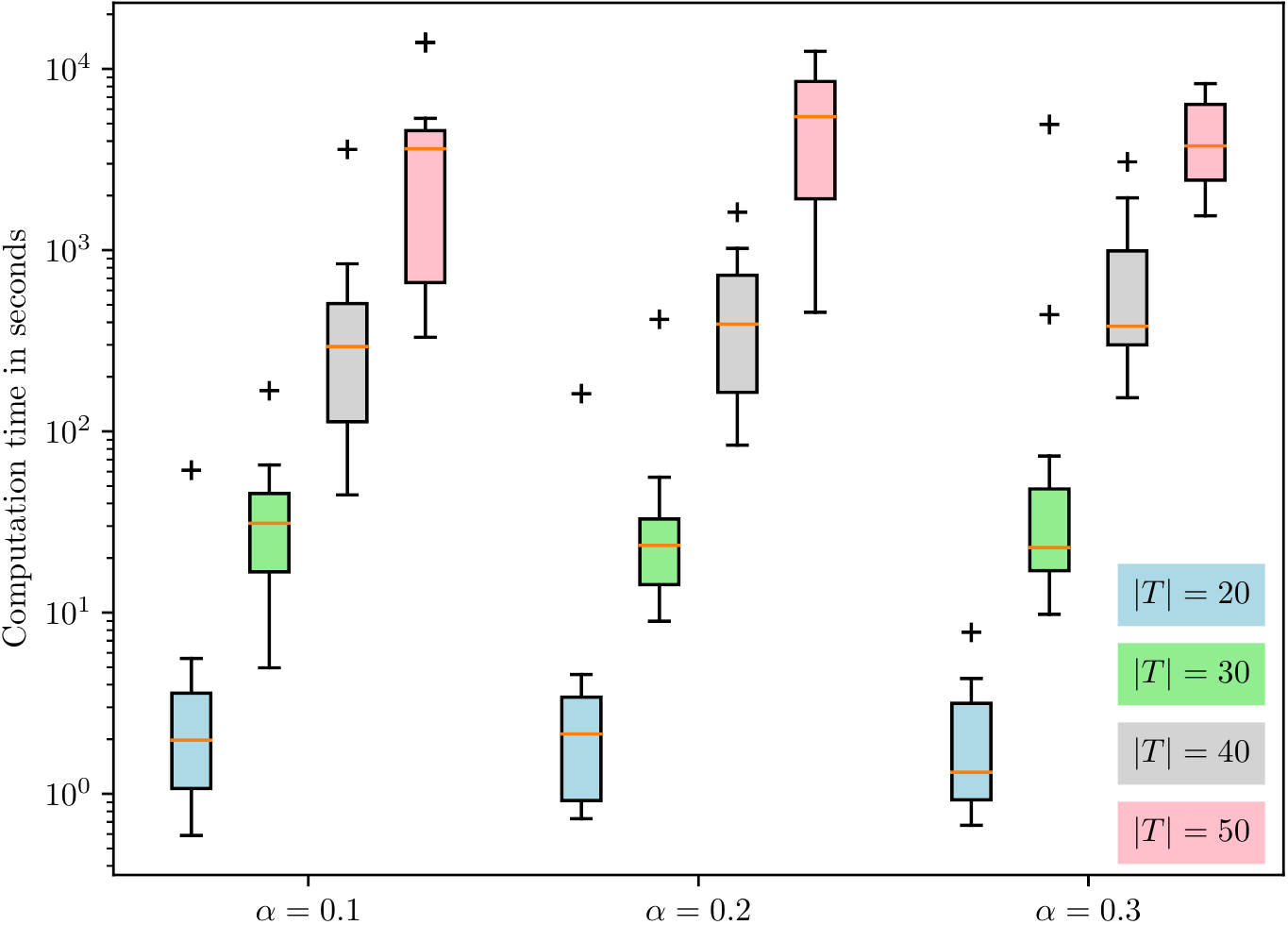}}
    \label{subfig:box_a}
    \subfigure[Computation time for the instances in class B that were solved to optimality within the 6-hour limit.]{\includegraphics[scale=0.6]{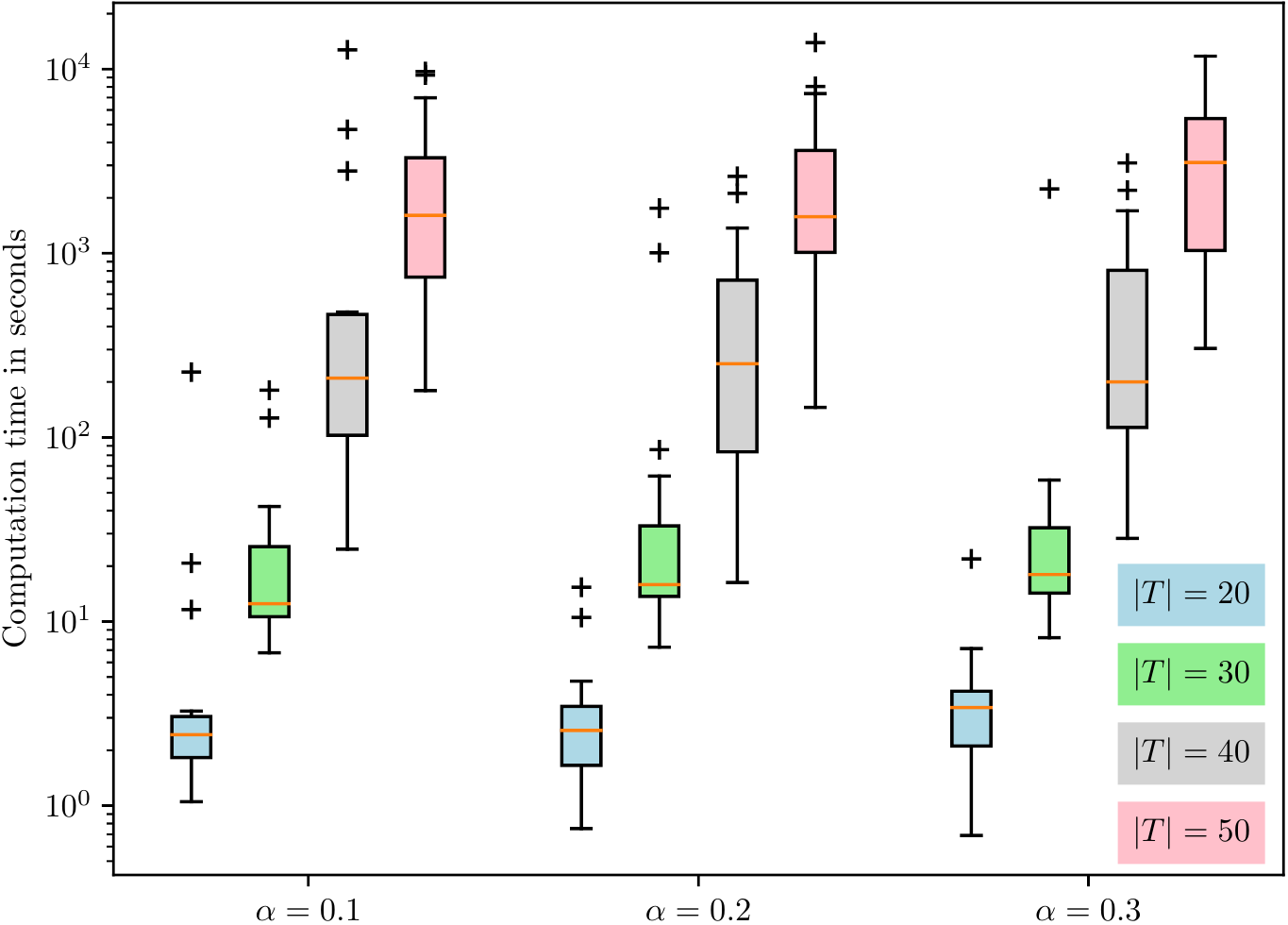}}
    \label{subfig:box_b}
    \caption{Time taken to compute an optimal solution by the branch-and-cut algorithm.}
\label{fig:box}
\end{figure}

The next set of results in Table \ref{tab:cuts} illustrate the effect of the dynamically adding the connectivity and $2$-matching constraints in Eqs. \eqref{eq:xsec} -- \eqref{eq:wsec2} and \eqref{eq:twomatch}, respectively, to the branch-and-cut algorithm by utilizing the separation algorithms detailed in Sec. \ref{subsec:separation}. These results are only shown for the class A instances since the numbers are similar for the class B instances. 

\begin{table}[htbp]
    \centering
    \caption{Average and maximum number of connectivity constraints in Eqs. \eqref{eq:xsec} -- \eqref{eq:wsec2} and $2$-matching constraints in Eq. \eqref{eq:twomatch} that were dynamically added to the branch-and-cut algorithm for the class A instances.}
    \begin{tabular}{crr}
        \toprule
         $|T|$ & Average & Maximum  \\
        \midrule
         20 & 399.21 & 4590 \\
         30 & 2388.00 & 31851\\
         40 & 5529.80 &  42789\\
         50 & 12931.48 & 36482 \\
         \bottomrule
    \end{tabular}
    \label{tab:cuts}
\end{table}
Finally, the Fig. \ref{fig:lp_round} shows the number of times the LP rounding heuristic in Sec. \ref{subsec:lprounding} produced better feasible solutions than the ones generated by CPLEX during the run of the branch-and-cut algorithm. 
\begin{figure}[htbp]
    \centering
    \includegraphics[scale=0.6]{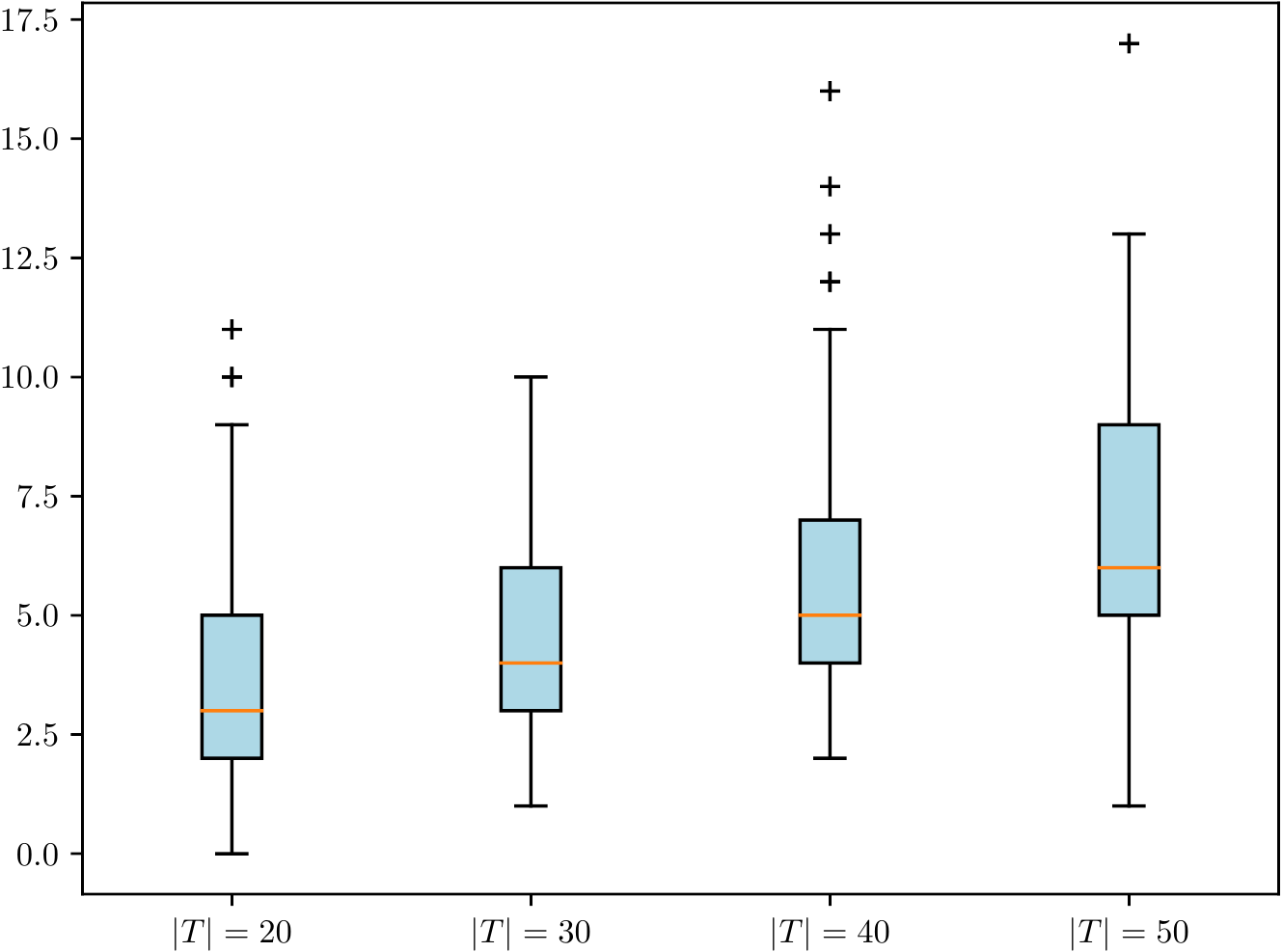}
    \caption{Box plot of the number of times the LP rounding heuristic in Sec. \ref{subsec:lprounding} was able to produce better feasible solutions than the feasible solutions computed by CPLEX.}
    \label{fig:lp_round}
\end{figure}
Next, we present the computational results for the transformation algorithm and corroborate its effectiveness on all the classes of instances.

\subsection{Performance of the transformation algorithm} \label{subsec:gtsp}
\begin{table}[htbp]
    \centering
     \caption{Relative optimality gaps (in \%) produced by the transformation algorithm on the instances in class A that were solved to optimality.}
    \begin{tabular}{cccc}
        \toprule
        $|T|$ & Maximum & Average & Standard deviation \\ 
        \midrule
        20 & 0.60 & 0.18 & 0.17 \\
        30 & 3.05 & 0.50 & 0.66 \\
        40 & 3.01 & 0.67 & 0.63 \\
        50 & 2.72 & 0.62 & 0.49 \\ 
        \bottomrule
    \end{tabular}
    \label{tab:opt-gap-A}
\end{table}
\begin{table}[htbp]
    \centering
     \caption{Relative optimality gaps (in \%) produced by the transformation algorithm on the instances in class B that were solved to optimality.}
    \begin{tabular}{cccc}
        \toprule
        $|T|$ & Maximum & Average & Standard deviation \\ 
        \midrule
        20 & 1.09 & 0.39 & 0.26 \\
        30 & 1.17 & 0.49 & 0.23 \\
        40 & 1.60 & 0.67 & 0.35 \\
        50 & 1.78 & 0.70 & 0.45 \\ 
        \bottomrule
    \end{tabular}
    \label{tab:opt-gap-B}
\end{table}
As mentioned in Sec. \ref{sec:trans_alg}, once the instance of CAGVRP is transformed to an instance of GTSP, the transformed problem is solved using the large neighborhood search heuristic for GTSP \cite{Smith2016}. Hence, all the solutions produced by the transformation algorithms are heuristic solutions and have a cost greater than or equal to the optimal. To show the effectiveness of the algorithm in computing solutions whose objective value is close to the cost of the optimal solution, we first present the results of the transformation algorithm on the instances of classes A and B, for which the optimum is computed using the branch-and-cut algorithm. To that end, the Fig. \ref{fig:heuristic_quality} shows a scatter plot of the optimal solution objective value vs the objective values obtained using the transformation algorithm. %
\begin{figure}[htbp]
    \centering
    \subfigure[Optimal cost vs heuristic solution cost for instances in class A.]{\includegraphics[scale=0.6]{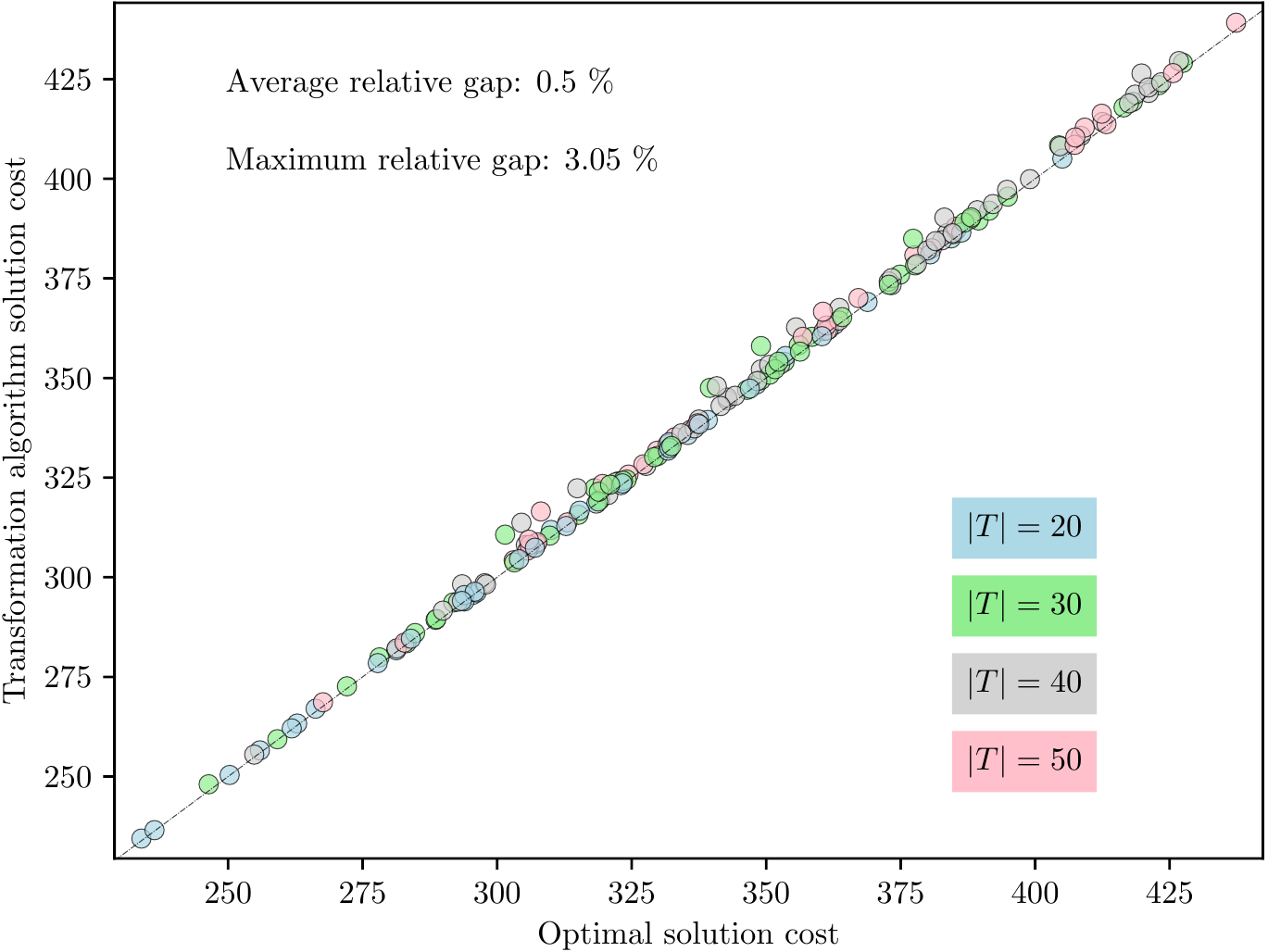}}
    \label{subfig:h_a}
    \subfigure[Optimal cost vs heuristic solution cost for instances in class B.]{\includegraphics[scale=0.6]{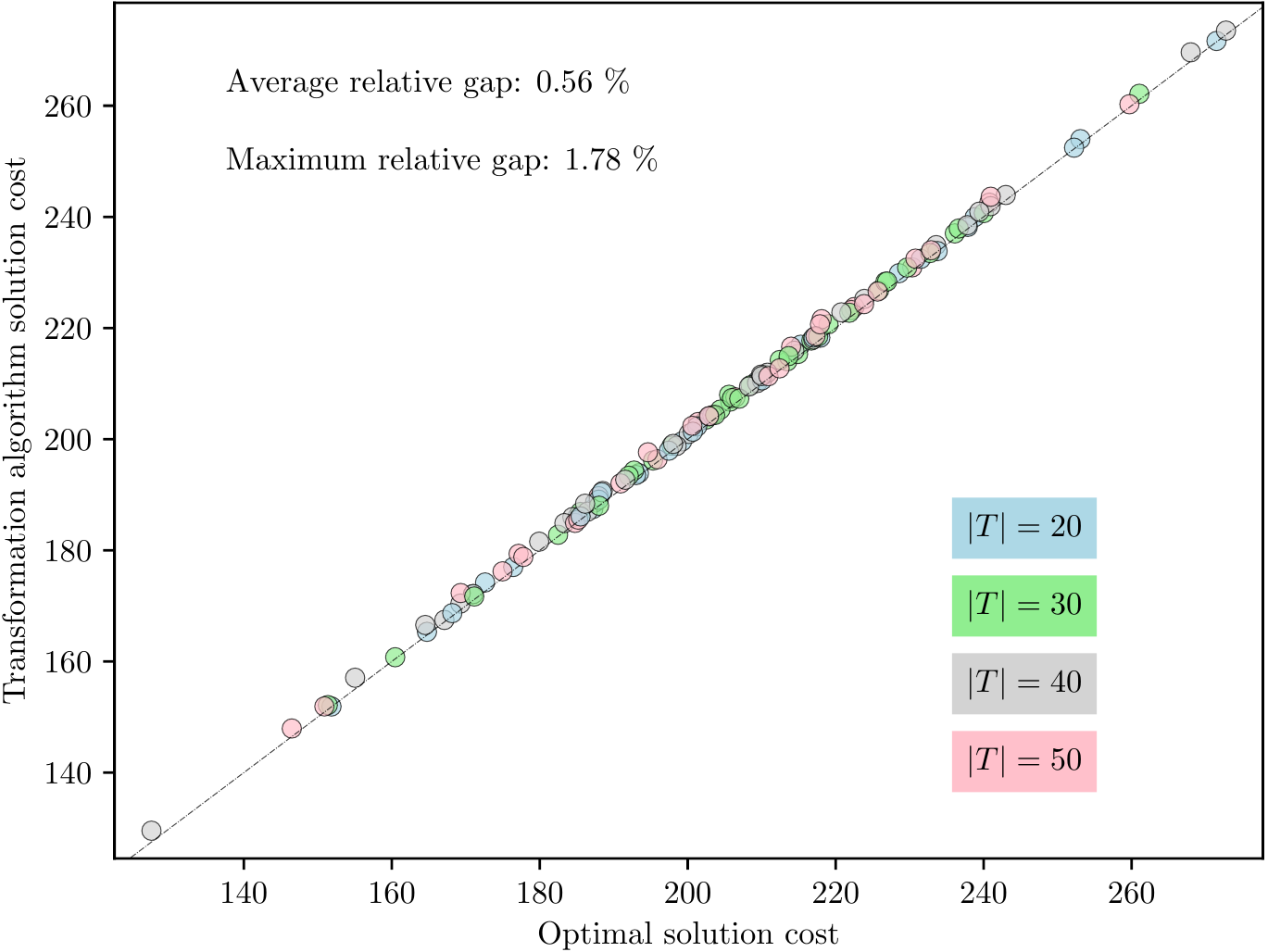}}
    \label{subfig:h_b}
    \caption{The scatter plots only include instances that were solved to optimality.}
\label{fig:heuristic_quality}
\end{figure}
The scatter plots in Fig. \ref{fig:heuristic_quality} shows that the transformation algorithm is very effective in computing heuristic solutions that are very close to the optimal with an average relative gap (see Eq. \ref{eq:relativegap}) of $0.5\%$ and $0.56\%$ for the the instances in classes A and B, respectively. In Fig. \ref{fig:heuristic_quality}, the closer the points are towards to line of slope $1$ unit, the better the quality of the solutions produced by the transformation algorithm. Furthermore, the Tables \ref{tab:opt-gap-A} and \ref{tab:opt-gap-B} show the maximum, average, and standard deviation of the relative gap (in percentage) between the objective values obtained by the exact algorithm, denoted by $c^*$, and transformation algorithm, denoted by $c^t$, for the instances in class A and B, respectively. The mathematical definition of relative gap is given by the following equation:
\begin{equation}
    \text{Relative Gap} = \frac{c^t - c^*}{c^*} \cdot 100 \%. \label{eq:relativegap}
\end{equation}

As for the instances in class C, we present a histogram of the computation times taken by the transformation algorithm in Fig. \ref{fig:histogram}. We observe from the histogram that the transformation algorithm is successful in finding a heuristic solution to the majority of the instances in class C within $50$ seconds. Finally, the table \ref{tab:time-C} shows the maximum, average, and standard deviation of the computation time of the transformation algorithm to compute a feasible solution for every instance in classes A, B, and C. 
\begin{figure}
    \centering
    \includegraphics[scale=0.6]{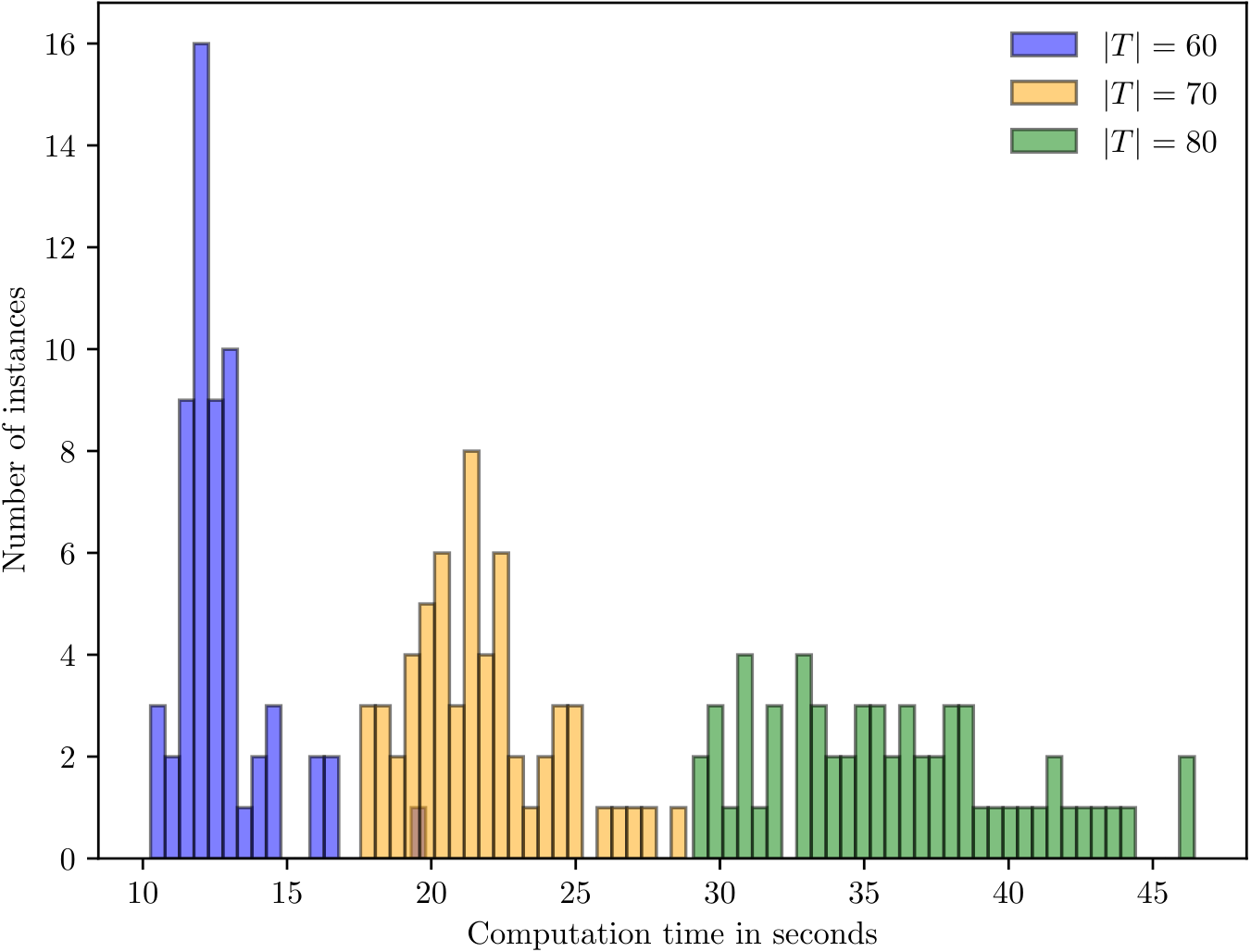}
    \caption{Histogram of the computation time taken by the transformation algorithm to find a heuristic solution to the instances in class C.}
    \label{fig:histogram}
\end{figure}
\begin{table}[htbp]
    \centering
     \caption{Computation time taken (in seconds) by the transformation algorithm for all the instances.}
    \begin{tabular}{cccc}
        \toprule
        $|T|$ & Maximum & Average & Standard deviation \\ 
        \midrule
        20 	 & 1.42 & 	 1.24 	& 0.08 \\
        30 	 & 2.38 & 	 1.91 	& 0.17 \\
        40 	 & 4.33 &	 3.46 	& 0.45 \\
        50 	 & 8.46 & 	 6.86 	& 0.81 \\
        60 	 & 19.80 & 	 12.72 	& 1.65 \\
        70 	 & 28.82 & 	 21.71 	& 2.52 \\
        80 	 & 46.44 &	 36.03 	& 4.38 \\
        \bottomrule
    \end{tabular}
    \label{tab:time-C}
\end{table}

\section{Conclusion and future work} \label{sec:conc}
This article develops a mathematical programming formulation, an algorithm to compute the optimal solution, and a fast heuristic for the cooperative vehicle routing problem involving one UAV and one ground vehicle in the presence of communication restrictions. The algorithms that were presented in this paper were tested on a wide range of instances, and the computational results corroborate the effectiveness of branch-and-cut algorithm to compute optimal solutions for small- and medium-sized test instances. The transformation algorithm is effective in computing near-optimal solutions for small-, medium-, and large-sized instances with a moderate computation time. Future directions for this work include generalization of this cooperative routing problem for multiple UAVs and/or multiple ground vehicles.

\section*{Acknowledgements} 
We gratefully acknowledge the support of the U.S. Department of Energy through the LANL/LDRD Program and the Center for Nonlinear Studies for this work.

\bibliographystyle{IEEEtran}
\bibliography{rcpacc.bib}
\end{document}